\def\dint{\displaystyle\int}
\def\dsum{\displaystyle\sum}
\def\dprod{\displaystyle\prod}
\def\t{\mathbf{t}}
\def\x{\mathbf{x}}
\def\<{\left<}
\def\>{\right>}
\def\M{\bar{\mathcal{M}}}
\def\S{\mathcal{S}}
\newtheorem{thm}{Theorem}
\newtheorem{lemma}{Lemma}[section]
\newtheorem{lemma*}{Lemma}
\newtheorem{prop}{Proposition}[section]
\newtheorem{prop*}{Proposition}
\begin{document}
\title{A formula on $g=1$ quantum K-invariants}
\author{Dun Tang}

\begin{abstract}
In this paper, we prove an analog of Dijkgraaf-Witten's theorem for $g=1$ invariants in quantum K-theory.
\end{abstract}

\maketitle

\tableofcontents

\section*{Introduction}
\

Let $X$ be a compact Kahler manifold, and $\M_{g,n}(X,d)$ be the compactified moduli space of stable maps of degree $d \in H^2(X,\mathbb{Z})$ to $X$.
The moduli space $\M_{g,n}(X,d)$ carries a virtual structural sheaf $\mathcal{O}^{vir}_{g,n,d}$ defined by Lee \cite{Lee01}.
For each $i \in \{1,2,\cdots,n\}$, there is a universal cotangent line bundles $L_i$ over $\M_{g,n}(X,d)$ obtained by gluing together cotangent lines at the $i^{th}$ marked point of the base curve, and an evaluation map $ev_i: \M_{g,n}(X,d) \to X$ defined by the image of the $i^{th}$ marked point under the stable map.
Let $K=K^0(X) \otimes \Lambda$, where $\Lambda$ is the Novikov ring that contains Novikov's variables $Q$'s.
Given $\t_i(q) \in K[q^\pm]$, we define $\Lambda$-valued correlators
\[\<\t_1,\cdots, \t_n\>_{g,n,d} = \chi(\M_{g,n}(X,\beta), \mathcal{O}^{vir}_{g,n,d} \otimes \left(\otimes_{i=1}^n (ev_i^\ast\t_i)(L_i)\right)).\]

We compute generating functions
\[\mathcal{F}_1(\t) = \sum_{n,d} \frac{Q^d}{n!} \<\t,\cdots, \t\>_{1,n,d}.\]
Note that from the computational point of view, Y.P. Lee and F. Qu gave an effective algorithm of $\mathcal{F}_1(\t)$ for the point target space \cite{Lee12}.
Y.C. Chou, L. Herr and Y.P. Lee \cite{Chou23} also constructed genus $g$ quantum K invariants of a general target $X$, from permutation equivariant $g=0$ invariants of the orbifold $Sym^{g+1}X = X^{g+1}/S_{g+1}$, where $S_{g+1}$ is the permutation group of $g+1$ elements, that permutes the $g+1$ copies of $X$.
In this paper, we gave a reconstruction formula that expresses $\mathcal{F}_1(\t)$ (for general targets $X$) in terms of $g=0$ invariants and $g=1$ invariants of $X$ that do not involve universal cotangent bundles at all but the first marked point.
Our main theorem is formulated similarly to Dijkgraaf-Witten's theorem on $g=1$ cohomological Gromov-Witten invariants.

\begin{thm}
The no-permutation invariant is
\[\mathcal{F}_1(\t) = F_1(\tau) + \frac1{24} \log \det\left(\frac{\partial \tau^\beta}{\partial t_0^\alpha}\right)+F^{tw}_1(\bar{\t}^{new}) - F^{tw}_1(\bar{\t}^{fake}).\]
Where $F_1(\tau) = \mathcal{F}_1(\tau)$ is the $g=1$ primary invariants.
\end{thm}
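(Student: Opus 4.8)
The plan is to mirror the cohomological proof of Dijkgraaf--Witten while replacing holomorphic Euler characteristics by their Kawasaki--Riemann--Roch (KRR) expansions, so that the genuine $g=1$ K-theoretic potential is reassembled from cohomological (``fake'') data supported on the inertia stack of $\M_{1,n}(X,d)$. Concretely, I would first reduce all descendant insertions to primary ones, and then use KRR together with the adelic characterization of quantum K-theory to split the resulting expression into a contribution localized at the untwisted sector (the fake theory, the $q\to 1$ asymptotic) and contributions from the twisted sectors (roots of unity $q=\zeta\neq 1$). This last splitting is precisely where $F^{tw}_1(\bar{\t}^{new})-F^{tw}_1(\bar{\t}^{fake})$ will come from.

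First I would establish the $g=1$ K-theoretic topological recursion relation together with the K-theoretic string and dilaton equations. Using these, every correlator $\langle \t,\cdots,\t\rangle_{1,n,d}$ can be rewritten so that the universal cotangent line $L_i$ appears only at the first marked point; the remaining inputs become primary, evaluated at the output coordinate $\tau^\beta=\tau^\beta(t)$ that is determined entirely by the $g=0$ theory. The effect on the generating function is the change of variables $t\mapsto\tau(t)$ whose differential is $\partial\tau^\beta/\partial t_0^\alpha$, and the leading term of this reduction reproduces $F_1(\tau)$; this is the step that forces a Jacobian to appear, exactly as in cohomology. The coefficient $\tfrac1{24}$ would then be pinned down from the genus-one one-pointed building block, the K-theoretic analog of $\int_{\M_{1,1}}\psi_1=\tfrac1{24}$: I would compute this normalization directly from $\M_{1,1}$ (equivalently, from its fake KRR contribution) and resum the chain-rule terms into $\tfrac1{24}\log\det\left(\partial\tau^\beta/\partial t_0^\alpha\right)$, using $\delta\log\det=\mathrm{tr}(\cdots)$ to match the recursion term by term.

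The K-theory-specific correction is the main obstacle. Applying KRR to $\mathcal{F}_1(\t)$ expresses the genuine invariants as a sum of cohomological integrals over the components of the inertia stack: the identity component yields the fake twisted potential $F^{tw}_1(\bar{\t}^{fake})$, namely the cohomological Gromov--Witten theory of $X$ with the standard $\mathrm{Td}$-type twist, while the components carrying nontrivial automorphisms---the nodal and ghost strata with $\mathbb{Z}/m$ stabilizers occurring in genus one---assemble into $F^{tw}_1(\bar{\t}^{new})$ with the shifted inputs. The difficulty is combinatorial bookkeeping: one must match the stable-graph/stratum decompositions of the two inertia stacks so that all intermediate strata cancel and only the genus-one vertex survives, and one must track how $q$ specializes to roots of unity across sectors so that the two twisted theories appear with opposite signs.

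I expect the hardest point to be confirming that this stratum cancellation is compatible with the descendant reduction of the previous steps, so that the $\tau$-shift and the $\tfrac1{24}\log\det$ term coincide on the genuine and fake sides and do not themselves acquire twisted-sector corrections. In particular I would have to verify that the genus-one TRR and the string/dilaton equations hold sector by sector after the KRR expansion, and that the primary potential $F_1(\tau)$ extracted from the genuine theory differs from its fake counterpart exactly by $F^{tw}_1(\bar{\t}^{new})-F^{tw}_1(\bar{\t}^{fake})$. Establishing this last identification, rather than the Jacobian computation, is where the bulk of the work lies.
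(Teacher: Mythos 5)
Your overall frame (reduce descendants, then apply Kawasaki--Riemann--Roch on the inertia stack and split into untwisted and twisted sectors) is the same as the paper's, but two of your structural choices are genuine gaps. First, you propose to obtain $F_1(\tau)$ \emph{and} the Jacobian term by establishing a genus-one K-theoretic TRR and reducing descendants ``as in cohomology,'' with the $\tfrac1{24}$ pinned down by a K-theoretic analog of $\int_{\M_{1,1}}\psi_1=\tfrac1{24}$. The paper does not do this, and the step you would need is exactly the one that is unavailable: no genus-one K-theoretic TRR is invoked or supplied. Instead the paper quotes the ancestor--descendant correspondence, which gives $\mathcal{F}_1(\t)=F_1(\tau)+\bar{\mathcal{F}}_1(\bar{\t})$ with no Jacobian appearing at that stage; the term $\tfrac1{24}\log\det\left(\partial\tau^\beta/\partial t_0^\alpha\right)$ then arises entirely from the \emph{untwisted} sector of the KRR expansion of $\bar{\mathcal{F}}_1(\bar{\t})$, via the cohomological computation $\int_{\M_{1,n}}\prod_i c_1(L_i)=(n-1)!/24$ on a cycle of $n$ spheres, together with Proposition 1.1 identifying $(I-A)^{-1}$ with the Jacobian so that $\sum_n\tfrac1{24n}\mathrm{tr}(A^n)$ resums to the $\log\det$. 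Your chain-rule mechanism for the Jacobian is the cohomological one and has no K-theoretic justification as stated.

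Second, your sector bookkeeping is off. You place $F^{tw}_1(\bar{\t}^{fake})$ on the identity component of the inertia stack (calling it the cohomological theory with the Td-twist) and $F^{tw}_1(\bar{\t}^{new})$ on the twisted components, with ``opposite signs'' to be tracked. But $F^{tw}_1$ as defined in the theorem is a sum of residues at $0,1,\infty$ of genuine $g=1$ correlators, which by the residue theorem equals minus the sum of residues at the nontrivial roots of unity: it is intrinsically a twisted-sector quantity, and the identity component contributes the $\log\det$ term, not $F^{tw}_1(\bar{\t}^{fake})$. In the paper \emph{both} $F^{tw}_1(\bar{\t}^{new})$ and $-F^{tw}_1(\bar{\t}^{fake})$ come from the twisted strata: the inputs at a fixed point of the symmetry get dressed by trees of rational components carrying $\tau$-insertions into $\bar{\t}^{new}$ via the $J$-function, and the subtraction of $F^{tw}_1(\bar{\t}^{fake})$ removes the over-count of configurations in which every marked point carries a $\tau$-input. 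With your assignment the over-count is never removed and the untwisted sector is double-booked, so the four terms of the formula would not assemble correctly.
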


Ingredients other than $F_1$ in this theorem are defined as follows, in terms of $g=0$ invariants.
\begin{enumerate}
\setcounter{enumi}{-1}
\item
Given $\tau \in K$, define generating functions
\[\<\!\<\t_1,\cdots, \t_n\>\!\>_{g,n} = \sum_{d,\bar{n}} \frac{Q^d}{\bar{n}!}\<\t_1,\cdots, \t_n; \tau, \cdots, \tau\>_{g,n+\bar{n},d}.\]
Let $\{\phi_\alpha\}$ be a basis of $K(X) \otimes_\mathbb{Z} \mathbb{Q}$, and $(\cdot,\cdot)$ be the Poincare pairing on $K(X)$ defined by $(\phi,\psi) = \chi(X, \phi\otimes \psi)$.
Define
\[G_{\alpha\beta}= (\phi_\alpha,\phi_\beta)+ \<\!\<\phi_\alpha, \phi_\beta\>\!\>_{0,2}\]
and let $G^{\alpha\beta}$ be so that the matrix $\left[G^{\alpha\beta}\right] = \left[G_{\alpha\beta}\right]^{-1}$.
We define
\[\S_\tau(\phi)(q) = \sum_{\alpha,\beta} \left((\phi,\phi_\alpha)+\left<\!\left< \frac{\phi}{1-L/q}, \phi_\alpha\right>\!\right>_{0,2}\right)G^{\alpha\beta}\phi_\beta.
\]
The operator $\S_\tau:K \to K \otimes \mathbb{Q}((q))$ extends $\mathbb{Q}((q))$-linearly to an operator defined on $K \otimes \mathbb{Q}((q))$, which we also denote by $\S_\tau$.
\item
Given $\t \in K[q^\pm]$ and $\tau \in K$, we define $\bar{\t}$ by $(1-q+\bar{\t}) = [\mathcal{S}_\tau(1-q+\t)]_+$, where $[\cdots]_+$ stands for extracting the Laurent polynomial part of a rational function.
We choose $\tau$ so that $\bar{\t}(1)=0$.
This defines $\bar{\t}$ and $\tau$.
\item
Let $t_0 = \t(1)$.
Expand $\tau,t_0$ as $\tau=\sum_\beta \tau^\beta \phi_\beta, t_0 = \sum_\alpha t_0^\alpha \phi_\alpha$.
This defines $\tau^\beta$ and $t_0^\alpha$.
\item
Let $\phi^\alpha$ be the basis of $K(X) \otimes \mathbb{Q}$ dual to $\phi_\alpha$ under the pairing $(\cdot,\cdot)$.
Let
\[\begin{array}{ll}
\bar{\t}^{fake}(q) &= \tau + \dsum_\alpha \phi^\alpha \<\!\<\dfrac{\phi_\alpha}{1-qL_1}\>\!\>_{0,1} \\
\bar{\t}^{new}(q) &= \bar{\t}(q) + \dsum_\alpha \phi^\alpha \<\!\<\dfrac{\phi_\alpha}{1-qL_1},\bar{\t}(q)\>\!\>_{0,2} + \bar{\t}^{fake}(q)
\end{array}\]
\item
Define
\[F^{tw}_1(\bar{\t}) = \sum_{a=0,1,\infty}Res_a \left(\sum_{n=0}^3 \sum_d \frac{Q^d}{(n+1)!} \<\frac{\bar{\t}(x^{-1})}{1-xL_1},\bar{\t}(x^{-1}),\cdots,\bar{\t}(x^{-1})\>_{1,n+1,d} \frac{dx}{x}\right).\]
\end{enumerate}

This paper is organized as follows.
In the first section, we prove this theorem.
In the second section, we verify that our theorem and the result in \cite{Lee12} agree for $2$-point invariants $\<\frac1{1-q_1 L}, \frac1{1-q_2L}\>_{1,2}$ of the point target space.
In the third section, we prove (in Theorem 2) that for the point target space,
\[\sum_{d,n} \frac{Q^d}{n!} \<\frac{1}{1-qL}, \t(L), \cdots, \t(L) \>_{1,1+n,d}\]
could be reconstructed from $\<\!\<\frac{1}{1-qL}\>\!\>_{1,1}$, and (in Proposition 3.1) that $\<\!\<\frac{1}{1-qL}\>\!\>_{1,1}$ could be reconstructed from $\<\frac{1}{1-qL}\>_{1,1}$ for the point target space.
In the appendix, we give an algorithm that computes $\tau$ to arbitrary accuracy.

\section*{Acknowledgments}
\

The author thanks Professor Alexander B. Givental for drawing his attention to $g=1$ quantum K-theory, suggesting related problems, and providing patience and guidance throughout the process.

\section{Proof of Theorem 1}
\

We define the matrix 
\[A = \left[\sum_\gamma G^{\alpha\gamma} \<\!\<\phi_\gamma, \bar{t}_1, \phi_\beta\>\!\>_{0,3}\right]\]
whose rows and columns are labeled by $\alpha,\beta$.

\begin{prop}
\[(I-A)^{-1} = \left[\frac{\partial \tau^\beta}{\partial t_0^\alpha}\right].\]
\end{prop}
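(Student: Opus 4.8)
The plan is to read the normalization of part~(1) as an implicit equation determining $\tau$ from the input $\t$, to rewrite it as an explicit relation between $t_0$ and $\tau$, and then to differentiate and invert. I would first record the two variational rules that generate the three-point bracket in $A$. Since $\<\!\<\cdots\>\!\>_{0,n}$ is assembled by inserting arbitrarily many copies of $\tau=\sum_\gamma\tau^\gamma\phi_\gamma$ with weight $Q^d/\bar{n}!$, differentiation in $\tau^\gamma$ inserts one additional $\phi_\gamma$,
\[\frac{\partial}{\partial\tau^\gamma}\<\!\<a_1,\dots,a_n\>\!\>_{0,n}=\<\!\<a_1,\dots,a_n,\phi_\gamma\>\!\>_{0,n+1}.\]
In particular $\partial_{\tau^\gamma}G_{\alpha\beta}=\<\!\<\phi_\alpha,\phi_\beta,\phi_\gamma\>\!\>_{0,3}$, and differentiating $\sum_\mu G_{\alpha\mu}G^{\mu\beta}=\delta_\alpha^\beta$ yields $\partial_{\tau^\gamma}G^{\alpha\beta}=-\sum_{\mu\nu}G^{\alpha\mu}\<\!\<\phi_\mu,\phi_\nu,\phi_\gamma\>\!\>_{0,3}G^{\nu\beta}$.

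Next I would turn the condition $\bar{\t}(1)=0$ into an explicit relation. Evaluating $1-q+\bar{\t}=[\S_\tau(1-q+\t)]_+$ at $q=1$ gives $\bar{\t}(1)=[\S_\tau(1-q+\t)]_+|_{q=1}=0$. Expanding $\S_\tau=I+O(q^{-1})$ — the $q^0$ coefficient of $\S_\tau(\phi_\mu)$ is $\sum_{\alpha\beta}\big((\phi_\mu,\phi_\alpha)+\<\!\<\phi_\mu,\phi_\alpha\>\!\>_{0,2}\big)G^{\alpha\beta}\phi_\beta=\sum_{\alpha\beta}G_{\mu\alpha}G^{\alpha\beta}\phi_\beta=\phi_\mu$ — and using the rational structure of $\tfrac{1}{1-L/q}=\tfrac{q}{q-L}$ together with $\bar{\t}(1)=0$, I expect this to collapse to an explicit identity of the form $t_0=\tau+C(\tau,\bar{\t})$, where the bare block $\tau$ is produced by the leading identity part of $\S_\tau$ and $C$ is a genus-zero correction assembled from $\<\!\<\cdots\>\!\>_{0,2}$, the matrix $G^{\alpha\beta}$, and the descendant insertion of $\bar{\t}$. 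Re-expressing $C$ in terms of $\bar{\t}$ via the defining relation is the step that replaces the input $\t$ by the normalized $\bar{\t}$.

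Finally I would differentiate $t_0=\tau+C(\tau,\bar{\t})$ in $\tau^\beta$ at fixed $\bar{\t}$. The bare term contributes $\delta^\alpha_\beta$, while differentiating $C$ brings down, via the insertion rule above, one three-point bracket $\<\!\<\,\cdot\,,\,\cdot\,,\phi_\gamma\>\!\>_{0,3}$ contracted against $G^{\alpha\gamma}$; the descendant slot $\tfrac{\bar{\t}}{1-L/q}$, once the polynomial part is taken and $q$ is set to $1$, reduces modulo $\bar{\t}(1)=0$ to its single linear coefficient $\bar{t}_1$ (the lowest surviving mode of $\bar{\t}$ at $q=1$). A direct computation should then give
\[\frac{\partial t_0^\alpha}{\partial\tau^\beta}=\delta^\alpha_\beta-\sum_\gamma G^{\alpha\gamma}\<\!\<\phi_\gamma,\bar{t}_1,\phi_\beta\>\!\>_{0,3},\]
which is precisely the $(\alpha,\beta)$ entry of $I-A$. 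Inverting by the chain rule gives $\big[\partial\tau^\beta/\partial t_0^\alpha\big]=(I-A)^{-1}$, as claimed. The main obstacle is the middle step: correctly extracting the explicit relation $t_0=\tau+C$ from the operator identity, which requires careful bookkeeping of how $[\cdots]_+$ interacts with the rational factor $\tfrac{1}{1-L/q}$ and of the evaluation at $q=1$ (where the a~priori singular $\tfrac{1}{1-L}$ must be regularized by the dilaton shift), and verifying that after differentiation exactly the first-order coefficient $\bar{t}_1$ survives in the three-point bracket rather than any higher descendant — a reduction that rests on the string-equation structure of the two-point function.
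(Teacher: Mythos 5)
Your overall strategy --- read the normalization $\bar{\mathbf{t}}(1)=0$ as an implicit equation for $\tau$ and apply the inverse function theorem --- is the same as the paper's, but the two steps you defer are exactly where the content of the proposition lies, and one of them, as you have set it up, would go wrong. The differentiation ``in $\tau^\beta$ at fixed $\bar{\mathbf{t}}$'' is not the inverse of the Jacobian in the statement: $\partial\tau^\beta/\partial t_0^\alpha$ is taken with the non-constant part of the input $\mathbf{t}$ held fixed, and $\bar{\mathbf{t}}$ (hence $\bar t_1$) is itself a function of $\tau$ and $\mathbf{t}$, so it cannot be frozen while $\tau$ varies; doing so discards the terms coming from $\partial\bar t_1/\partial\tau^\beta$ and from $\partial G^{\alpha\gamma}/\partial\tau^\beta$ (you record the latter variational rule at the outset, but no such term appears in your final formula, with no explanation of a cancellation). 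The paper sidesteps this by first reducing the constraint to the unambiguous form $0=\langle\!\langle\phi_\beta,1,\mathbf{x}(L)\rangle\!\rangle_{0,3}$ with $\mathbf{x}=1-q+\mathbf{t}$, and differentiating \emph{that} in $t_0^\alpha$, which yields $0=G_{\alpha\beta}+\sum_\gamma\langle\!\langle\phi_\beta,1,\mathbf{x},\phi_\gamma\rangle\!\rangle_{0,4}\,\partial\tau^\gamma/\partial t_0^\alpha$ with nothing left implicit.

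Even granting the differentiation, what it produces is the four-point matrix $\sum_\epsilon G^{\alpha\epsilon}\langle\!\langle\phi_\epsilon,\phi_\beta,1,\mathbf{x}\rangle\!\rangle_{0,4}$ as the inverse Jacobian, not the three-point matrix $I-A$ of the statement. Identifying the two is a genuine step, which the paper carries out with two ingredients absent from your outline: (i) a string-equation lemma computing the expansion of $[\mathcal{S}_\tau\mathbf{x}]_+$ in powers of $q-1$, which simultaneously yields the constraint above as the degree-zero term and expresses the linear coefficient (hence $1-\bar t_1$) through the four-point correlator $\langle\!\langle\phi_\epsilon,\mathbf{x},1,1\rangle\!\rangle_{0,4}$ once the degree-zero term vanishes; and (ii) the WDVV equation, which trades $\sum_{\delta,\epsilon}\langle\!\langle\phi_\gamma,\phi_\delta,\phi_\beta\rangle\!\rangle_{0,3}G^{\delta\epsilon}\langle\!\langle\phi_\epsilon,1,1,\mathbf{x}\rangle\!\rangle_{0,4}$ for $\sum_{\delta,\epsilon}\langle\!\langle\phi_\gamma,\phi_\delta,1\rangle\!\rangle_{0,3}G^{\delta\epsilon}\langle\!\langle\phi_\epsilon,\phi_\beta,1,\mathbf{x}\rangle\!\rangle_{0,4}$ and collapses it. You flag the string-equation reduction as ``the main obstacle'' but do not perform it, and WDVV is not invoked at all, so the proposal does not yet establish the claim; the target identity $\partial t_0^\alpha/\partial\tau^\beta=(I-A)^\alpha_{\ \beta}$ is correct, but the route you describe does not reach it.
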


We first prove the following result on $[\mathcal{S}_\tau \x]_+$ expanded at $q=1$, for $\x \in K^0(X) \otimes \Lambda[q^\pm]$.

Remark that the $\frac1{1-L/q}$ in $\S_\tau$ is understood as follows.
There is a Laurent polynomial $P$ with zeros at roots of unities that annihilates $L$.
So $\Phi(q_1,q_2) = \frac{P(q_1)-P(q_2)}{q_1-q_2}$ is a Laurent polynomial, and we interpret $\frac1{1-L/q}$ as the Laurent polynomial $\frac{q\Phi(q,L)}{P(q)}$ in $L$.

\begin{lemma}
Expanded as a power series in $q-1$,
\begin{enumerate}
\item
The degree $0$ term in $[\mathcal{S}_\tau \x]_+$ is
\[\sum_{\alpha,\beta}\phi_\alpha G^{\alpha\beta} \<\!\< \phi_\beta,\x(L),1\>\!\>_{0,3}.\]
\item
If the degree $0$ term in $[\mathcal{S}_\tau \x]_+$ vanish, then the degree $1$ term is
\[\sum_{\alpha,\beta}\phi_\alpha G^{\alpha\beta} \<\!\< \phi_\beta,\x(L),1,1\>\!\>_{0,4}.\]
\end{enumerate}
\end{lemma}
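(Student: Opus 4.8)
The plan is to reduce both parts to scalar generating-function identities and then to apply the genus-$0$ K-theoretic string equation. Put $c_\alpha(q)=(\x(q),\phi_\alpha)+\<\!\<\frac{\x(q)}{1-L/q},\phi_\alpha\>\!\>_{0,2}$, so that the definition of $\S_\tau$ reads $[\S_\tau\x]_+=\sum_{\alpha,\beta}[c_\alpha]_+\,G^{\alpha\beta}\phi_\beta$. Since $[G^{\alpha\beta}]$ is invertible and symmetric, matching coefficients shows that the two assertions are equivalent to $[c_\alpha]_+(1)=\<\!\<\phi_\alpha,\x(L),1\>\!\>_{0,3}$ and, under the vanishing hypothesis, $\frac{d}{dq}[c_\alpha]_+\big|_{q=1}=\<\!\<\phi_\alpha,\x(L),1,1\>\!\>_{0,4}$ for every $\alpha$; I would work entirely with these.

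First I would put $[c_\alpha]_+$ in closed form. By the Remark the class $L$ is annihilated by a Laurent polynomial whose zeros are roots of unity, so $q/(q-L)$ has poles only at roots of unity, and polynomial division in $q$ gives $\big[\frac{\x(q)}{1-L/q}\big]_+=\frac{q\,\x(q)-L\,\x(L)}{q-L}$, where $\x(L)$ denotes $\x$ with its formal variable replaced by the class $L$ and the right-hand side is an honest polynomial in $q$ and $L$. Because $\chi$ is linear in the class $L$, the projection $[\,\cdot\,]_+$ commutes past the correlator, so that $[c_\alpha]_+(q)=(\x(q),\phi_\alpha)+\<\!\<\frac{q\,\x(q)-L\,\x(L)}{q-L},\phi_\alpha\>\!\>_{0,2}$. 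This is a Laurent polynomial in $q$, regular at $q=1$, so its value and first derivative there are elementary to extract; in particular $[c_\alpha]_+(1)=(\x(1),\phi_\alpha)+\<\!\<\frac{\x(1)-L\,\x(L)}{1-L},\phi_\alpha\>\!\>_{0,2}$.

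The substance of the argument is the string equation. Forgetting a marked point carrying the insertion $1$ along $\pi\colon\M_{0,n+1}(X,d)\to\M_{0,n}(X,d)$, I would use $\pi_\ast\mathcal{O}^{vir}=\mathcal{O}^{vir}$ (the fibres are rational, so $R^1\pi_\ast\mathcal{O}=0$) together with the cotangent-line comparison $L^{\mathrm{new}}=\pi^\ast L\otimes\mathcal{O}(D)$ along the divisor $D$ where the forgotten point collides with the point carrying $\x$. This produces the divided-difference rule $\x(L)\mapsto\frac{\x(1)-L\,\x(L)}{1-L}$ and the identity $\<\!\<\phi_\alpha,\x(L),1\>\!\>_{0,3}=(\phi_\alpha,\x(1))+\<\!\<\phi_\alpha,\frac{\x(1)-L\,\x(L)}{1-L}\>\!\>_{0,2}$, in which the term $(\phi_\alpha,\x(1))$ is the $(d,\bar n)=(0,0)$ contribution that has to be supplied separately because $\M_{0,2}(X,0)$ is unstable. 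Comparing with the closed form for $[c_\alpha]_+(1)$ proves part (1).

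For part (2) I would compute $\frac{d}{dq}[c_\alpha]_+\big|_{q=1}$ directly from the closed form and, independently, reduce $\<\!\<\phi_\alpha,\x(L),1,1\>\!\>_{0,4}$ by two applications of the string equation: forgetting one $1$-point ($4\to3$) yields $\<\!\<\phi_\alpha,u(L),1\>\!\>_{0,3}$ with $u(L)=\frac{\x(1)-L\,\x(L)}{1-L}$ and---crucially---no extra classical term, since the target $\M_{0,3}(X,0)=X$ is now stable; applying part (1) to $u$ then completes the expansion. A direct comparison of the two rational integrands gives $\<\!\<\phi_\alpha,\x(L),1,1\>\!\>_{0,4}=\frac{d}{dq}[c_\alpha]_+\big|_{q=1}+[c_\alpha]_+(1)$, so the degree-$1$ term equals the four-point function precisely when the degree-$0$ term $[c_\alpha]_+(1)$ vanishes. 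I expect the real difficulty to be not the algebra but pinning down the string equation exactly---the correct divided-difference operator and, above all, the asymmetric treatment of the classical $(0,0)$ stratum, present in the $3\to2$ reduction but absent in the $4\to3$ one, which is exactly what makes the vanishing hypothesis indispensable in part (2).
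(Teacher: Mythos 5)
Your proposal is correct and follows essentially the same route as the paper: extract the $q=1$ Taylor coefficients of $[\S_\tau\x]_+$ via the divided-difference identity $\big[\frac{\x(q)}{1-L/q}\big]_+=\frac{q\x(q)-L\x(L)}{q-L}$ (the paper gets the same coefficients by expanding $\frac1{1-L/q}$ in powers of $(q-1)^{-1}$), match the constant term with the three-point correlator via the string equation including its unstable $(\phi_\alpha,\x(1))$ correction, and reduce part (2) to part (1) applied to the divided difference of $\x$, with the vanishing hypothesis killing the leftover three-point term. The only difference is packaging --- a global closed form differentiated at $q=1$ versus an order-by-order expansion --- and your identity $\<\!\<\phi_\alpha,\x(L),1,1\>\!\>_{0,4}=\frac{d}{dq}[c_\alpha]_+\big|_{q=1}+[c_\alpha]_+(1)$ is exactly the content of the paper's final step.
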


\begin{proof}
Let $f(q)$ be a rational function with zero Laurent polynomial part, and let $g(q)$ be a Laurent polynomial.
A direct computation shows that the Taylor expansion of $[fg]_+$ at $1$, is the power series part of $fg$ with $f$ expanded as a power series in $(q-1)^{-1}$ and $g$ expanded as a power series in $(q-1)$.

If $\x(q) = \sum_{k \geq 0} x_k(q-1)^k$, then by
\[\begin{array}{ll}
\dfrac1{1-L/q} & = \dsum_{m \geq 0} \dfrac{(L-1)^m q}{(q-1)^{m+1}}\\
&= \dsum_{m \geq 0} (L-1)^m (\dfrac1{(q-1)^{m+1}} + \dfrac1{(q-1)^m})
\end{array}\]
we have
\[\begin{array}{ll}
\left[\dfrac{\x(q)}{1-L/q} \right]_+& = \dsum_{k \geq m \geq 0} x_k (L-1)^m((q-1)^{k-m} + (q-1)^{k-m-1})\\
&= \dsum_n (q-1)^n \left[\dfrac{\x(L)}{(L-1)^n} + \dfrac{\x(L)}{(L-1)^{n+1}}\right]_+.
\end{array}\]

Thus the degree $0$ term of $\left[\frac{\x(q)}{1-L/q} \right]_+$ when expanded as a power series in $q-1$ is
\[\left[\x(L)+\dfrac{\x(L)}{L-1}\right]_+ = \x(L)+\dfrac{\x(L)-\x(1)}{L-1}.\]
We get the first part from this and the String equation \cite{perm7}.

As for the second part, by String equation \cite{perm7} we have
\[\<\!\< \phi_\beta,\x(L),1,1\>\!\>_{0,4} = \<\!\< \phi_\beta,\x(L),1\>\!\>_{0,3} + \<\!\< \phi_\beta,\frac{\x(L)-\x(1)}{L-1},1\>\!\>_{0,3}.\]
The first term on the right-hand side vanishes by assumption.
Since
\[\left[\left(1+\dfrac1{L-1}\right)\left(\dfrac{\x(L)-\x(1)}{L-1}\right)\right]_+ = \left[\left(\dfrac1{L-1} + \dfrac1{(L-1)^2}\right)\x(L)\right]_+,\]
the second part of the lemma follows from the first part with $\frac{\x(L)-\x(1)}{L-1}$ in the place of $\x(L)$, 
\end{proof}

\begin{proof}
(Proof of Proposition 1.1)
Use $\x = 1-q+\t$.
By the previous lemma and the assumption that $\bar{\x}(1)=0$ we have
\[\sum_\gamma G^{\alpha\gamma}\<\!\<\phi_\gamma, 1- \bar{t}_1, \phi_\beta\>\!\>_{0,3}
=\sum_{\gamma,\delta,\epsilon} G^{\alpha\gamma}\<\!\<\phi_\gamma, \phi_\delta, \phi_\beta\>\!\>_{0,3} G^{\delta\epsilon}\<\!\<\phi_\epsilon, 1,1,\x\>\!\>_{0,4}.\]
By WDVV equation and the fact that $G_{\gamma\delta} = \<\!\<1, \phi_\gamma,\phi_\delta\>\!\>_{0,3}$, the right-hand side of the previous equation is simplified as
\[\sum_{\gamma,\delta,\epsilon} G^{\alpha\gamma}\<\!\<\phi_\gamma, \phi_\delta, 1\>\!\>_{0,3} G^{\delta\epsilon}\<\!\<\phi_\epsilon, \phi_\beta,1,\x\>\!\>_{0,4} 
= \sum_\epsilon G^{\alpha\epsilon}\<\!\<\phi_\epsilon, \phi_\beta,1,\x\>\!\>_{0,4}.\]

On the other hand, the first part of the lemma says
\[0=\<\!\<\phi_\beta,1,\x\>\!\>_{0,3}.\]
Note that $\tau$ is computed from $\x$.
We write $x (=\x(1))= \sum_\alpha x^\alpha \phi_\alpha$ and $\tau = \sum_\beta \tau^\beta \phi_\beta$.
Taking partial derivative in $x^\alpha$, we have
\[\begin{array}{ll}
0 &= \<\!\< \phi_\beta, 1, \phi_\alpha \>\!\>_{0,3_1} + \dsum_\gamma \<\!\< \phi_\beta, 1, \x, \phi_\gamma \>\!\>_{0,4} \dfrac{\partial \tau^\gamma}{\partial x^\beta} \\
&= G_{\alpha\beta} + \dsum_\gamma \<\!\< \phi_\beta, 1, \x, \phi_\gamma \>\!\>_{0,4} \dfrac{\partial \tau^\gamma}{\partial x^\beta}.
\end{array}\]
Thus the matrix 
\[I-A =\sum_\gamma G^{\alpha\gamma}\<\!\< \phi_\gamma,1-\bar{t}_1,\phi_\beta\>\!\>_{0,3}\]
is inverse to $\frac{\partial \tau^\beta}{\partial x^\alpha}$.
Since $x$ coordinates and $t$ coordinates differ by only a shift, taking partial derivatives in $x^\alpha$ is the same as taking partial derivatives in $t^\alpha$.
\end{proof}

Going back to the proof of Theorem 1, the first term comes from the Ancestor-Descendant correspondence \cite{perm7}
\[\mathcal{F}_1(\t) = F_1(\tau) + \bar{\mathcal{F}}_1(\bar{\t}).\]

By definition,
\[\bar{\mathcal{F}}_1(\bar{\t}) = \sum_{n,\bar{n},d}\frac{Q^d}{n! \bar{n}!} \cdot \chi\left(\M_{1,n+\bar{n}}(X,d),\prod_{i=1}^n ev_i^\ast\bar{\t}(\bar{L}_i) \cdot \prod_{j=n+1}^{n+\bar{n}} ev_j^\ast\tau\right),\]
where $\bar{L}_i$ is the pull back of $L_i$ along the forgetful maps $\pi_{\bar{n},n}: \cup_d \M_{1,n+\bar{n}}(X,d) \to \M_{1,n}$ that forgets the last $\bar{n}$ marked points and the map to $X$.
We simplify this holomorphic Euler characteristic by using the virtual Kawasaki Riemann-Roch theorem \cite{Tonita14} and pushing forward along $\pi_{\bar{n},n}$.
Define the fake Euler characteristics by
\[\chi^{fake}(\mathcal{M},V) = \int_{[\mathcal{M}]^{vir}} ch(V)td(X).\]
Then Kawasaki's Riemann Roch theorem states
\[\chi(\mathcal{M},V) = \chi^{fake}\left(I\mathcal{M},\frac{V}{\wedge^\ast N^\ast_{I\mathcal{M}/\mathcal{M}}}\right),\]
where $N_{I\mathcal{M}/\mathcal{M}}$ is the normal bundle of $I\mathcal{M}$ in $\mathcal{M}$.
\

The second term in the theorem is the contribution from non-twisted strata.
By dimension arguments, for each $n, \bar{n}$, the fake Euler characteristic
\[\begin{array}{ll}
&\chi^{fake}\left(\M_{1,n+\bar{n}}(X,d),\dprod_{i=1}^n ev_i^\ast\bar{\t}(\bar{L}_i) \cdot \dprod_{j=n+1}^{n+\bar{n}} ev_j^\ast\tau\right)\\
=&\dint_{\M_{1,n}} ch\left(\dprod_{i=1}^n(L_i-1)\right) \cdot \int_{\pi_{\bar{n}, n}^{-1} (\{C\})} \left(ch\big(\dprod_{i=1}^n ev_i^\ast\bar{t}_1 \cdot \dprod_{j=n+1}^{n+\bar{n}} ev_j^\ast\tau\big) td \big(\pi_{\bar{n}, n}^{-1} (\{C\}) \big)\right) ,
\end{array}\]
where $\bar{t}_1 = \bar{\t}'(1)$, the coefficient of the $(q-1)$-term in $\bar{\t}(q)$.
Here 
\[\int_{\pi_{\bar{n}, n}^{-1} (\{C\})} \left(ch\big(\dprod_{i=1}^n ev_i^\ast\bar{t}_1 \cdot \dprod_{j=n+1}^{n+\bar{n}} ev_j^\ast\tau\big) td \big(\pi_{\bar{n}, n}^{-1} (\{C\}) \big)\right)\]
is the (orbi-)bundle over $\M_{1,n}$, with fiber over each point $\{C\} \in \M_{1,n}$ given by the integral.
By dimension argument, the fake Euler characteristics only depend on the zeroth Chern class of this bundle.
Also, notice that the isotropy of fibers are subgroups of the isotropy of base points. By applying the (virtual) Hirzebruch-Riemann-Roch theorem, this integration is written as 
\[(\pi_{\bar{n}, n})_\ast (\dprod_{i=1}^n ev_i^\ast\bar{t}_1 \cdot \dprod_{j=n+1}^{n+\bar{n}} ev_j^\ast\tau),\]
with virtual dimension 
\[\left((\pi_{\bar{n}, n})_\ast (\dprod_{i=1}^n ev_i^\ast\bar{t}_1 \cdot \dprod_{j=n+1}^{n+\bar{n}} ev_j^\ast\tau)\right)\bigg|_{C_n},\]
for any curve $C_n$ with trivial isotropy.
We pick $C_n$ as the curve given by a cycle of $n$ spheres connected along the north and south poles, each with a marked point.
By \cite{WDVV}, the effect of connecting two spheres along a node that represents points $p,p'$ under normalization, is using $\sum_{\alpha,\beta} G^{\alpha\beta} \phi_\alpha \otimes \phi_\beta$ as the input for $p,p'$.
Again, by dimension argument and the cohomological dilaton equation, the integral
\[\dint_{\M_{1,n}} ch\left(\dprod_{i=1}^n(L_i-1)\right) = \dint_{\M_{1,n}} \dprod_{i=1}^n c_1(L_i) = \dfrac{(n-1)!}{24} \]
Take into account the coefficient $\frac{Q^d}{n!\bar{n}!}$, sum over $n,\bar{n},d$, and use Proposition 1.1, we get
\[\sum_n\frac1{24n} tr(A^n) = \frac1{24} tr \log (I-A)^{-1} = \frac1{24} tr\log \left(\frac{\partial \tau^\beta}{\partial t_0^\alpha}\right) = \frac1{24} \log \det \left(\frac{\partial \tau^\beta}{\partial t_0^\alpha}\right)\]
as the contribution from the non-twisted stratum to $\bar{\mathcal{F}}_1(\bar{\t})$.

The last two terms in the theorem come from twisted strata.
Indeed, by the residue theorem and the fact that $\<\frac{\phi_\alpha}{1-xL_1},\tau,\cdots,\tau\>_{1,n+1,d}$ only has poles at roots of unities, we have
\[\begin{array}{ll}
F^{tw}_1(\bar{\t}) &= \dsum_{a=0,1,\infty} Res_a \left(\dsum_d \dsum_{n=0}^3 \dfrac{Q^d}{(n+1)!} \<\dfrac{\bar{\t}(x^{-1})}{1-xL_1},\bar{\t}(x^{-1}),\cdots,\bar{\t}(x^{-1})\>_{1,n+1,d} \dfrac{dx}{x}\right)\\
&=-\dsum Res_\zeta\left(\dsum_d \dsum_{n=0}^3 \dfrac{Q^d}{(n+1)!} \<\dfrac{\bar{\t}(x^{-1})}{1-xL_1},\bar{\t}(x^{-1}),\cdots,\bar{\t}(x^{-1})\>_{1,n+1,d} \dfrac{dx}{x}\right)\\
&=\dsum Res_\zeta\left(\dsum_d \dsum_{n=0}^3 \dfrac{Q^d}{(n+1)!} \<\dfrac{\bar{\t}(x)}{1-x^{-1}L_1},\bar{\t}(x),\cdots,\bar{\t}(x)\>_{1,n+1,d} \dfrac{dx}{x}\right),
\end{array}\]
where the latter two sums are over roots of unities $\zeta \neq 1$.

Now from the expansion
\[\frac1{x-L_1} = \sum_n \frac{(L_1-\zeta)^k}{(x-\zeta)^{k+1}}\]
we know that
\[\begin{array}{ll}
&Res_\zeta \<\dfrac{\bar{\t}^{new}(x)}{1-x^{-1}L_1},\bar{\t}^{new}(x),\cdots,\bar{\t}^{new}(x)\>_{1,n+1,d} \dfrac{dx}{x}\\
=& \<\bar{\t}^{new}(\zeta),\bar{\t}^{new}(\zeta),\cdots,\bar{\t}^{new}(\zeta)\>_{1,n+1,d}\\
+& (n+1) \<\bar{\t}^{new}(L_1)-\bar{\t}^{new}(\zeta),\bar{\t}^{new}(\zeta),\cdots,\bar{\t}^{new}(\zeta)\>_{1,n+1,d}
\end{array}\]
On the other hand, by dimension argument, no (rational) components will bubble out from fixed points of the symmetry of twisted strata after forgetting all marked points with input $\tau$'s.
So all marked points carrying input $\tau$'s are located on a tree of rational components emerging from a fixed point of the symmetry of the twisted strata.
Recall that
\[J(\tau) = 1-q+\tau+\sum_{n,d,\alpha,\beta}\frac{Q^d}{n!}\phi_\alpha g^{\alpha\beta}\<\frac{\phi_\beta}{1-qL_1}, \tau,\cdots,\tau\>_{0,1+n,d}.\]
\begin{enumerate}
\item
The contribution from such trees, with one no-$\bar{L}$ input $\bar{t}$ and arbitrarily many inputs $\tau$'s, is given by the Lie derivative of $J(\tau)$ in the direction $\bar{t}$.
By $\mathbb{Q}[\bar{L}]$-linearity, if the input is a polynomial $\bar{\t}(\bar{L})$ instead of $\bar{t}$, the contribution is the Lie derivative of $J(\tau)$ in the direction $\bar{\t}(x)$, then with the place-holder $x$ replaced by $L_1(=\bar{L}_1)$.
\item
The contribution from such trees with all input $\tau$'s is $J(\tau)-(1-q)$.
\end{enumerate}

Putting things together, we know that the inputs add up to $\bar{\t}^{new}(q)$ as desired, and the correlators sum up to
\[\<\bar{\t}^{new}(\zeta),\bar{\t}^{new}(\zeta),\cdots,\bar{\t}^{new}(\zeta)\>_{1,n+1,d}+ (n+1) \<\bar{\t}^{new}(L_1)-\bar{\t}^{new}(\zeta),\bar{\t}^{new}(\zeta),\cdots,\bar{\t}^{new}(\zeta)\>_{1,n+1,d}\]

Notice that there is an over-count: when all marked points carry input $\tau$'s, which contributes $F^{tw}_1(\bar{\t}^{fake})$.
This over-count accounts for the last term.

\section{Example: $\<\frac1{1-q_1 L}, \frac1{1-q_2L}\>_{1,2}$ of point target space}
\

The algorithm in \cite{Lee12} gives two point invariants for the point target space
\[\begin{array}{ll}
&\<\dfrac1{1-q_1 L}, \dfrac1{1-q_2L}\>_{1,2}\\
=& \dfrac1{(1-q_1)(1-q_2^2)(1-q_2^3)(1-q_2^4)}+\dfrac1{(1-q_2)(1-q_1^2)(1-q_1^3)(1-q_1^4)}-\dfrac1{(1-q_1)(1-q_2)}\\
+&\dfrac1{24}\dfrac{q_1q_2}{(1-q_1)^2(1-q_2)^2}\\
+&\dfrac18\dfrac{q_1q_2}{(1-q_1^2)(1-q_2^2)}\left(11-\dfrac{2q_1}{1+q_1}-\dfrac{2q_2}{1+q_2}\right)\\
+&\dfrac14\dfrac{q_1q_2}{(1-q_1)(1-q_2)}\dfrac{1+(q_1+q_2)-q_1q_2}{(1+q_1^2)(1+q_2^2)}\\
+&\dfrac13\dfrac{q_1q_2}{(1-q_1)(1-q_2)}\dfrac{1+2(q_1+q_2)+q_1q_2}{(1+q_1+q_1^2)(1+q_2+q_2^2)}
\end{array}\]

We re-prove this result with Theorem 1 and the facts
\[\begin{array}{ll}
\<\dfrac1{1-qL}\>_{1,1} & =\dfrac1{(1-q^4)(1-q^6)}\\
\<\dfrac1{1-qL},1\>_{1,2} & =\dfrac1{(1-q^2)(1-q^3)(1-q^4)}
\end{array}\]

\subsection{Preparations}
\

When $X=pt$, it is known that
\[1-q+\tau+\<\!\<\frac1{1-qL}\>\!\>_{1,1}=(1-q) e^{\frac{\tau}{1-q}}.\]
Thus we have
\[\begin{array}{lll}
\bar{\t}^{new} (q) &= (\bar{\t} (q)+1-q) & \cdot e^{\tau/(1-q)} -(1-q),\\
\bar{\t}^{fake} (q) &= (1-q) & \cdot e^{\tau/(1-q)} -(1-q),
\end{array}\]

Note that
\[\<\frac1{1-q_1 L}, \frac1{1-q_2L}\>_{1,2} = \left(D_1D_2\mathcal{F}_1(\t)\right)|_{\t=0},\]
where $D_i = \sum_{n\geq 0} \frac{q_i^n}{(1-q_i)^{n+1}}\partial_{t_n}$.
Let $I=(t_0,\cdots,t_n,\cdots)$.

We have
\[\tau = t_0+t_0t_1+\frac{t_0^2t_2}2+t_1^2t_0 + O(I^4).\]
Thus modulo $I^3$ we have
\[\begin{array}{ll}
\bar{\t}(q) &= \left[e^{\frac{\tau}{q-1}}(\t(q)+1-q)\right]_+-(1-q)\\
&=\t(q)+\dfrac{\tau(\t(q)-t_0)}{q-1}-\tau,
\end{array}\]
and
\[\begin{array}{ll}
\bar{\t}^{new}(q) &= e^{\frac{\tau}{1-q}}(\bar{\t}(q)+1-q)-(1-q)\\
&=\t(q)+\dfrac{t_0^2}{2(1-q)}.
\end{array}\]
We also have
\[\begin{array}{ll}
\bar{\t}^{fake}(q) &= e^{\frac{\tau}{1-q}}(1-q)-(1-q)\\
&=\tau+\dfrac{\tau^2}{2(1-q)}\\
&=t_0+t_0t_1+\dfrac{t_0^2}{2(1-q)}.
\end{array}\]

We need the following partial fraction decompositions.

\[\begin{array}{ll}
\dfrac1{q(1-q^{-2})(1-q^{-3})(1-q^{-4})}&=\dfrac{9q+7}{32(q+1)^2}+\dfrac{q-1}{8(q^2+1)}+\dfrac{2q+1}{9(q^2+q+1)}+O\left(\dfrac1{1-q}\right);\\
\dfrac1{q(1-q^{-4})(1-q^{-6})}&=\dfrac1{24(q-1)^2}+\dfrac5{24(q-1)}+O((q-1)^0);
\end{array}\]
And we have
\[\frac1{(1+q)(1-q^3)(1-q^4)} = \frac{3q+4}{8(q+1)^2}-\frac{q}{4(q^2+1)}+\frac1{3(q^2+q+1)}+\frac1{24(1-q)^2}+\frac1{8(1-q)}.\]

\subsection{Computations via Theorem 1}
\

We shorthand $a_i=\frac{q_i}{1-q_i}$ for $i=1,2$.
Thus $D_i=\dfrac1{1-q_i}\dsum_{n\geq 0} a_i^n \partial_{t_n}$.

\subsubsection{Contributions from $F_1(\tau)$}
\

Modulo $I^3$, the first term in Theorem 1 is
\[\begin{array}{ll}
F_1(\tau)&=\<\tau\>_{1,1}+\dfrac{\<\tau,\tau\>_{1,2}}2\\
&=\tau+\dfrac{\tau^2}2\\
&=(t_0+t_0t_1)+\dfrac{t_0^2}2,
\end{array}\]
thus it contribute (to $\<\frac1{1-q_1 L}, \frac1{1-q_2L}\>_{1,2}$)
\[\frac1{(1-q_1)(1-q_2)}(a_1+a_2+1).\]

\subsubsection{Contributions from $\frac1{24}\log \frac{\partial \tau}{\partial t_0}$}
\

Modulo $I^3$, we have
\[\dfrac{\partial \tau}{\partial t_0}= 1+t_1+t_0t_2+t_1^2.\]
Thus
\[\frac1{24}\log \dfrac{\partial \tau}{\partial t_0} = \frac1{24}\left((t_1+t_0t_2+t_1^2)-\frac{t_1^2}2\right),\]
and this part contribute
\[\frac1{24}\frac1{(1-q_1)(1-q_2)}(a_1^2+a_1a_2+a_2^2).\]

\subsubsection{Contributions from $n=0$ term in $F^{tw}_1(\bar{\t}^{new})-F^{tw}_1(\bar{\t}^{fake})$}
\

Modulo $I^3$ and substitute $q=x^{-1}$, what we need to compute is
\[\left(Res_0+Res_1+Res_\infty\right)\dfrac{t_0t_1}{q(1-q^{-4})(1-q^{-6})}dq.\]
The residue at $0$ is $0$.
The residue at $\infty$ is $-1$.
By the second partial fraction decomposition in Section 2.1, the residue at $1$ is $\frac5{24}$.
Thus, this part contribute 
\[-\frac{19}{24}\frac1{(1-q_1)(1-q_2)}(a_1+a_2).\]

\subsubsection{Contributions from $n=1$ term in $F^{tw}_1(\bar{\t}^{new})$}
\

Use the residue formula and substitute $q=x^{-1}$, what we need to compute is
\[\sum_{\zeta = -1,\pm i,\omega^{\pm 2}} Res_\zeta \frac12 (\bar{\t}^{new}(q))^2 \dfrac{dq}{q(1-q^{-2})(1-q^{-3})(1-q^{-4})}.\]
Here $\omega = e^{\frac{2\pi i}6}$.
Applying $D_1D_2$ yields
\[\sum_{\zeta = -1,\pm I,\omega^{\pm 2}} Res_\zeta \dfrac1{(1-qq_1)(1-qq_2)} \dfrac{dq}{q(1-q^{-2})(1-q^{-3})(1-q^{-4})}.\]

\begin{enumerate}
\item
The expansions
\[\frac1{1-qq_i} = \frac1{1+q_i}+\frac{q_i}{(1+q_i)^2}(q+1)+O((q+1)^2)\]
together with the partial fraction decomposition
\[\dfrac1{q(1-q^{-2})(1-q^{-3})(1-q^{-4})} = -\frac1{16}\frac1{(1+q)^2}+\frac9{32}\frac1{1+q}+O((q+1)^0)\]
writes the residue at $-1$ as
\[-\frac1{16}\frac1{(1+q_1)(1+q_2)}\left(\frac{q_1}{1+q_1}+\frac{q_2}{1+q_2}\right) + \frac9{32}\frac1{(1+q_1)(1+q_2)}.\]
\item
To compute the residue at $\pm i$, observe that the relevant term in the partial fraction decomposition is
\[\frac{q-1}{8(q^2+1)} = \frac{1+i}{16(q-i)}+\frac{1-i}{16(q+i)}.\]
Thus, the contribution is
\[\frac{1+i}{16}\frac1{(1-iq_1)(1-iq_2)}+\frac{1-i}{16}\frac1{(1+iq_1)(1+iq_2)},\]
which simplifies as
\[\frac{1-q_1-q_2-q_1q_2}{8(1+q_1^2)(1+q_2^2)}.\]
\item
To compute the residue at $\omega^{\pm2}$, observe that the relevant term in the partial fraction decomposition is
\[\frac{2q+1}{9(q^2+q+1)} = \frac1{9(q-\omega^2)}+\frac1{9(q-\omega^{-2})}.\]
Thus, the contribution is
\[\frac19\frac1{(1-\omega^2 q_1)(1-\omega^2 q_2)}+\frac19\frac1{(1-\omega^{-2} q_1)(1-\omega^{-2}q_2)},\]
which simplifies as
\[\frac{2+q_1+q_2-q_1q_2}{9(1+q_1+q_1^2)(1+q_2+q_2^2)}.\]
\end{enumerate}

\subsubsection{Contributions from $n=1$ term in $F^{tw}_1(\bar{\t}^{fake})$}
\

The invariant $F^{tw}_1(\bar{\t}^{fake})$ is precisely $F^{tw}_1(\bar{\t}^{new})$ evaluated at $q_1=q_2=0$, which is
\[\frac9{32}+\frac18+\frac29.\]

\subsection{Comparing the results}
\

The differences of terms with poles at $-1, \pm i, \omega^{\pm2}$, in \cite{Lee12}'s and our results, are as follows.
\begin{enumerate}
\item
At $-1$ we have
\[\begin{array}{ll}
&\dfrac{3q_1+4}{8(q_1+1)^2}+\dfrac{3q_2+4}{8(q_2+1)^2}+\dfrac18\dfrac{q_1q_2}{(1-q_1^2)(1-q_2^2)}\left(11-\dfrac{2q_1}{1+q_1}-\dfrac{2q_2}{1+q_2}\right)\\
-&\left(-\dfrac1{16}\dfrac1{(1+q_1)(1+q_2)}\bigg(\dfrac{q_1}{1+q_1}+\dfrac{q_2}{1+q_2}\bigg) + \dfrac9{32}\dfrac1{(1+q_1)(1+q_2)}\right)\\
=&\dfrac{23}{32(1-q_1)(1-q_2)}.
\end{array}\]
\item
At $\pm i$ we have
\[\begin{array}{ll}
&-\dfrac{q_1}{4(q_1^2+1)}-\dfrac{q_2}{4(q_2^2+1)}+\dfrac14\dfrac{q_1q_2}{(1-q_1)(1-q_2)}\dfrac{1+(q_1+q_2)-q_1q_2}{(1+q_1^2)(1+q_2^2)}\\
-&\dfrac{1-q_1-q_2-q_1q_2}{8(1+q_1^2)(1+q_2^2)}\\
=&-\dfrac1{8(1-q_1)(1-q_2)}.
\end{array}\]
\item
At $\omega^{\pm 2}$ we have
\[\begin{array}{ll}
&\dfrac1{3(q_1^2+q_1+1)}+\dfrac1{3(q_2^2+q_2+1)}+\dfrac13\dfrac{q_1q_2}{(1-q_1)(1-q_2)}\dfrac{1+2(q_1+q_2)+q_1q_2}{(1+q_1+q_1^2)(1+q_2+q_2^2)}\\
-&\dfrac{2+q_1+q_2-q_1q_2}{8(1+q_1+q_1^2)(1+q_2+q_2^2)}\\
=&\dfrac4{9(1-q_1)(1-q_2)}.
\end{array}\]
\end{enumerate}

Thus after multiplying $(1-q_1)(1-q_2)$, the result in \cite{Lee12} minus our result gives the vanishing quantity
\[\begin{array}{ll}
&\dfrac{(a_1+1)^2+(a_2+1)^2}{24}+\dfrac{(a_1+1)+(a_2+1)}8-1+\dfrac{a_1a_2}{24}\\
+&\dfrac{23}{32}-\dfrac18+\dfrac49\\
-&\left((a_1+a_2+1)+\dfrac1{24}(a_1^2+a_1a_2+a_2^2)-\dfrac{19}{24}(a_1+a_2)-(\dfrac9{32}+\dfrac18+\dfrac29)\right).
\end{array}\]

\section{$1$-pointed $g=1$ invariants of the point target space}
\

In this section, we use 
\[D=\sum_n \frac{q^n}{(1-q)^{n+1}} \frac{\partial}{\partial t_n}.\]
In particular, we have 
\[D\mathcal{F}_1(\t) = \sum_{d,n} \frac{Q^d}{n!} \<\frac{1}{1-qL}, \t(L), \cdots, \t(L) \>_{1,1+n,d}.\]
Recall that for the point target space
\[J(\tau) = (1-q) e^{\frac{\tau}{1-q}}.\]
So
\[\begin{array}{ll}
\bar{\t}^{new} (x) &= (\bar{\t} (x)+1-x) \cdot e^{\tau/(1-x)} -(1-x),\\
\bar{\t}^{fake} (x) &= (1-x) \cdot e^{\tau/(1-x)} -(1-x).
\end{array}\]

We prove
\begin{thm}
The $g=1$ invariants $D\mathcal{F}_1(\t)$ with $1$ distinguished marked point of the point target space is written in terms of $\<\!\<\frac1{1-qL}\>\!\>_{1,1}$ as
\[\begin{array}{ll}
&\<\!\<\dfrac1{1-qL}\>\!\>_{1,1} + \dfrac1{24}(\dfrac{\bar{t}_2}{1-\bar{t}_1} + \dfrac{q}{1-q})D\tau\\
+&\dsum_{a=0,1,\infty,q}Res_a \left(\dsum_{n=0}^3 \frac1{n!} \<\dfrac{D\bar{\t}^{new} (x^{-1})}{1-xL},\bar{\t}^{new}(x^{-1}),\cdots,\bar{\t}^{new}(x^{-1})\>_{1,n+1} \dfrac{dx}{x}\right)\\
-&\dsum_{a=0,1,\infty,q}Res_a \left(\dsum_{n=0}^3 \frac1{n!} \<\dfrac{D\bar{\t}^{fake} (x^{-1})}{1-xL},\bar{\t}^{fake}(x^{-1}),\cdots,\bar{\t}^{fake}(x^{-1})\>_{1,n+1} \dfrac{dx}{x}\right)
\end{array}\]
\end{thm}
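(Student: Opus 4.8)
The plan is to derive Theorem 2 by applying the first-order differential operator $D = \sum_n \frac{q^n}{(1-q)^{n+1}}\partial_{t_n}$ to the formula of Theorem 1, specialized to $X = pt$, and then reorganizing the four resulting groups of terms. Because $\mathcal{F}_1(\t) = F_1(\tau) + \frac1{24}\log\frac{\partial\tau}{\partial t_0} + F^{tw}_1(\bar\t^{new}) - F^{tw}_1(\bar\t^{fake})$, and each ingredient depends on $\t$ only through the intermediate quantities $\tau$, $\bar\t$, $\bar t_1$, $\bar t_2$, I would compute $D$ of each ingredient by the chain rule, treating $D$ as a derivation. The elementary computation I would record first is that inserting the distinguished descendant amounts to $D\t(x^{-1}) = \frac{x}{x-q}$ (using $\t(L)=\sum_n t_n(L-1)^n$ and summing the geometric series), so that $D\bar\t^{new}(x^{-1})$ and $D\bar\t^{fake}(x^{-1})$ acquire a simple pole at $x=q$. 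This is precisely the origin of the new residue point $a=q$ that appears in Theorem 2 but was absent in Theorem 1.

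For the first ingredient, $D F_1(\tau) = F_1'(\tau)\,D\tau$ produces a genus-one one-point quantity whose distinguished marked point carries only a primary insertion. I would then invoke the Ancestor-Descendant correspondence together with the point-target $J$-function $J(\tau)=(1-q)e^{\tau/(1-q)}$ to restore the descendant $\frac1{1-qL}$ at that point, identifying the outcome with $\langle\!\langle\frac1{1-qL}\rangle\!\rangle_{1,1}$ plus the elliptic correction $\frac1{24}\frac{q}{1-q}D\tau$ that arises when the distinguished descendant is paired against the tautological class on $\bar{\mathcal{M}}_{1,1}$. For the second ingredient, Proposition 1.1 gives $\frac{\partial\tau}{\partial t_0}=(1-\bar t_1)^{-1}$ for the point target, so $D\big[\frac1{24}\log\frac{\partial\tau}{\partial t_0}\big] = \frac1{24}\frac{D\bar t_1}{1-\bar t_1}$; expressing $D\bar t_1$ via the chain rule in terms of $\bar t_2$ and $D\tau$ yields the remaining correction $\frac1{24}\frac{\bar t_2}{1-\bar t_1}D\tau$. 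Together these reproduce the first line of the statement.

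For the two twisted ingredients I would differentiate $F^{tw}_1(\bar\t^{new})$ and $F^{tw}_1(\bar\t^{fake})$ as derivations acting on their inputs. Since every input equals $\bar\t^{new}(x^{-1})$ (with the first also carrying $\frac1{1-xL_1}$), the chain rule produces one term in which $D$ hits the distinguished first slot, giving $\frac{D\bar\t^{new}(x^{-1})}{1-xL_1}$, and $n$ further terms in which $D$ hits an ordinary slot. I would then take residues, now summing over $a=0,1,\infty$ together with the new point $a=q$, and re-use the root-of-unity symmetrization from the proof of Theorem 1, namely the identity converting the special first slot evaluated against $\frac1{1-xL_1}$ into inputs evaluated at $\zeta$ plus a correction. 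The effect of this symmetrization is to absorb the $n$ ordinary-slot derivatives into the first-slot term and upgrade its combinatorial weight from $\frac1{(n+1)!}$ to $\frac1{n!}$, producing exactly the residue expressions in the statement.

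The main obstacle will be the bookkeeping in this last step: controlling the interplay between the genuinely asymmetric differentiation (only the first slot carries $\frac1{1-xL_1}$) and the residue symmetrization, while simultaneously tracking the new pole at $x=q$ introduced by $D\t(x^{-1})=\frac{x}{x-q}$, so that no contribution is double-counted between the $a=q$ residue and the root-of-unity residues. A secondary subtlety is confirming the precise elliptic correction $\frac{q}{1-q}D\tau$ in the first term through the Ancestor-Descendant correspondence, since this is where the descendant nature of the distinguished point must be reconciled with the purely primary potential $F_1(\tau)$.
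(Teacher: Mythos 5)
Your overall strategy---apply $D$ to the formula of Theorem~1 term by term, use the product rule on the twisted terms, and trace the new residue point $a=q$ to the pole of $D\t(x^{-1})=\tfrac{x}{x-q}$---is the same as the paper's. But there is a concrete error in your treatment of the $\log$-determinant term, and it forces a compensating, unjustified claim about the first term. You write $D\bigl[\tfrac1{24}\log\tfrac{\partial\tau}{\partial t_0}\bigr]=\tfrac1{24}\tfrac{D\bar t_1}{1-\bar t_1}$ and then take $D\bar t_1=\bar t_2\,D\tau$. That is false: $\bar t_1=\sum_n t_n\tau^{n-1}/(n-1)!$ depends on the $t_n$ both through $\tau$ and explicitly, and the explicit dependence contributes
\[
\sum_n\frac{q^n}{(1-q)^{n+1}}\frac{\tau^{n-1}}{(n-1)!}=\frac{q}{(1-q)^2}\,e^{\frac{q\tau}{1-q}}=\frac{q}{1-q}(1-\bar t_1)\,D\tau ,
\]
so that $D\bar t_1=\bigl(\bar t_2+\tfrac{q}{1-q}(1-\bar t_1)\bigr)D\tau$. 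This is exactly the paper's Lemma~3.1, and it means the $\log$-determinant term \emph{alone} already produces the entire correction $\tfrac1{24}\bigl(\tfrac{\bar t_2}{1-\bar t_1}+\tfrac{q}{1-q}\bigr)D\tau$ in the statement.

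Consequently your attribution of $\tfrac1{24}\tfrac{q}{1-q}D\tau$ to an ``elliptic correction'' arising from $DF_1(\tau)$ via the Ancestor--Descendant correspondence cannot stand: $DF_1(\tau)$ and $D\bigl[\tfrac1{24}\log\tfrac{\partial\tau}{\partial t_0}\bigr]$ are each well-defined quantities, the latter is computed above unambiguously, and adding $\tfrac1{24}\tfrac{q}{1-q}D\tau$ again from the former would double-count it---carried out literally, your plan either overshoots by $\tfrac1{24}\tfrac{q}{1-q}D\tau$ or fails to close. The genuinely delicate point of the first line, namely that the $DF_1(\tau)$ contribution is accounted for by $\<\!\<\tfrac1{1-qL}\>\!\>_{1,1}$ with \emph{no} further correction (which requires reconciling $DF_1(\tau)=\<\!\<1\>\!\>_{1,1}D\tau$ with the descendant insertion, in interaction with the residue terms), is asserted rather than proved in your proposal, much as it is in the paper; but your version pins that assertion to a specific extra term that contradicts the elementary computation of $D\bar t_1$. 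Fix $D\bar t_1$ first; the rest of your outline (product rule on the four slots upgrading $\tfrac1{(n+1)!}$ to $\tfrac1{n!}$, and the bookkeeping of the pole at $x=q$ against the root-of-unity residues) is consistent with the paper's argument.
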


The differentials $D\tau, D\bar{\t}^{new}$ and $D\bar{\t}^{fake}$ are given as follows.

\begin{lemma}
We have
\[\begin{array}{ll}
D\tau&=\dfrac1{(1-\bar{t}_1)(1-q)} e^{\frac{q\tau}{1-q}}\\
D\bar{\t}^{new} (x)&=e^{\frac{\tau}{1-x}} \cdot (\dfrac1{(1-\bar{t}_1)(1-q)} + \dfrac1{1-qx})e^{\frac{q\tau}{1-q}}\\
D\bar{\t}^{fake} (x)&= e^{\frac{\tau}{1-x}} \cdot \dfrac1{(1-\bar{t}_1)(1-q)} e^{\frac{q\tau}{1-q}}
\end{array}\]
\end{lemma}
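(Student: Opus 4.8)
The plan is to reduce all three identities to the computation of $D\tau$ and $D\bar{\t}$ by the chain rule, exploiting the explicit exponential forms recalled for the point target. Since $\bar{\t}^{fake}(x)=(1-x)e^{\tau/(1-x)}-(1-x)$ depends on $\t$ only through $\tau$, differentiating gives $D\bar{\t}^{fake}(x)=e^{\tau/(1-x)}D\tau$ immediately, so the third formula is a direct consequence of the first. For $\bar{\t}^{new}(x)=(\bar{\t}(x)+1-x)e^{\tau/(1-x)}-(1-x)$ the chain rule gives $D\bar{\t}^{new}(x)=e^{\tau/(1-x)}\big(D\bar{\t}(x)+(\bar{\t}(x)+1-x)\tfrac{D\tau}{1-x}\big)$, so the whole lemma is reduced to finding $D\tau$ and $D\bar{\t}$.

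To obtain $D\tau$ I would differentiate the normalization $\bar{\t}(1)=0$. Using the recalled form $\bar{\t}(x)=[(1-x+\t(x))e^{\tau/(x-1)}]_+-(1-x)$, this normalization is the scalar equation obtained by extracting the value at $x=1$ of the Laurent-polynomial part; expanding $1-x+\t(x)$ in powers of $(x-1)$ and $e^{\tau/(x-1)}$ in powers of $(x-1)^{-1}$, exactly as in the proof of Lemma 1.1, it becomes the fixed-point equation $\tau=\sum_{m\ge0}\tfrac{t_m}{m!}\tau^m$. Applying $D=\sum_n\tfrac{q^n}{(1-q)^{n+1}}\partial_{t_n}$ and collecting the $D\tau$ terms, the factor $1-\sum_{m\ge1}\tfrac{t_m}{(m-1)!}\tau^{m-1}=1-\bar{t}_1$ appears on the left, while the inhomogeneous part sums to $\sum_m\tfrac{q^m}{(1-q)^{m+1}}\tfrac{\tau^m}{m!}=\tfrac1{1-q}e^{q\tau/(1-q)}$; dividing gives the first formula. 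Here one also records that $\bar{t}_1=\sum_{m\ge1}\tfrac{t_m}{(m-1)!}\tau^{m-1}$, which is precisely the degree-one coefficient supplied by the second part of Lemma 1.1.

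The delicate step is $D\bar{\t}(x)$. The direction of $D$ is the descendent $\tfrac1{1-qL}$, so $D\t(x)=\tfrac1{1-qx}$, and since $D$ acts in the $t$-coordinates while $[\,\cdot\,]_+$ truncates in $x$ these two operations commute; thus $D\bar{\t}(x)=[Dg(x)]_+$ with $g(x)=(1-x+\t(x))e^{\tau/(x-1)}$ and $Dg=\tfrac{e^{\tau/(x-1)}}{1-qx}-\tfrac{D\tau}{1-x}\,g$. Two truncation identities drive the rest. First, $\big[\tfrac{e^{\tau/(x-1)}}{1-qx}\big]_+=\tfrac{e^{q\tau/(1-q)}}{1-qx}$, obtained by expanding $e^{\tau/(x-1)}$ in $(x-1)^{-1}$ and $\tfrac1{1-qx}$ in $(x-1)$ and keeping the non-negative powers. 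Second, writing $g=[g]_+ + g_-$, the normalization $[g]_+(1)=0$ forces $\tfrac{[g]_+}{1-x}$ to be a genuine power series in $(x-1)$, so that $\big[\tfrac{g}{1-x}\big]_+-\tfrac{[g]_+}{1-x}=\big[\tfrac{g_-}{1-x}\big]_+$, and the latter is annihilated by $[\,\cdot\,]_+$ because $\tfrac{g_-}{1-x}$ is a pure principal part at $x=1$. Substituting these into the chain-rule expression for $D\bar{\t}^{new}(x)$ collapses the $[\,\cdot\,]_+$-terms to closed form, after which the stated expression follows by combining with $D\tau$ and the prefactor $e^{\tau/(1-x)}$.

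I expect the main obstacle to be exactly this $[\,\cdot\,]_+$ bookkeeping: keeping track of how the Laurent-polynomial truncation interacts with multiplication by $\tfrac1{1-x}$ and by $\tfrac1{1-qx}$, and verifying that every boundary contribution at the root of unity $x=1$ is controlled by the normalization $\bar{\t}(1)=0$. Once the two truncation identities above are established cleanly, assembling the three formulas is routine.
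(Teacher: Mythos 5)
Your strategy for $D\tau$ and $D\bar{\t}^{fake}$ is exactly the paper's: differentiate the fixed-point equation $\tau=\sum_m t_m\tau^m/m!$ to isolate $(1-\bar t_1)D\tau$, then apply the chain rule to $\bar{\t}^{fake}=(1-x)(e^{\tau/(1-x)}-1)$. For $D\bar{\t}$ you commute $D$ with $[\cdot]_+$ and invoke two truncation identities, whereas the paper differentiates the coefficient formulas $\bar t_m=\sum_n t_n\tau^{n-m}/(n-m)!$ term by term; both routes are legitimate, and your two truncation identities are in fact correct.

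The gap is in your final step. Carrying out your own substitution does not give the stated formula. Your identities yield $D\bar{\t}(x)=\frac{e^{q\tau/(1-q)}}{1-qx}-D\tau\cdot\frac{\bar{\t}(x)+1-x}{1-x}$, which (correctly) vanishes at $x=1$ because $\bar{\t}(1)\equiv 0$; feeding this into the chain rule for $\bar{\t}^{new}$ collapses everything to $D\bar{\t}^{new}(x)=e^{\tau/(1-x)}e^{q\tau/(1-q)}/(1-qx)$, i.e.\ the stated expression \emph{minus} the summand $\frac{1}{(1-\bar t_1)(1-q)}$. The paper's proof produces that extra summand by summing $D\bar t_m=(\bar t_{m+1}+(\tfrac{q}{1-q})^m(1-\bar t_1))D\tau$ over all $m\ge 0$, but the coefficient formula being differentiated fails at $m=0$: the normalization forces $\bar t_0\equiv 0$, hence $D\bar t_0=0$ rather than $D\tau$, and correspondingly the paper's expression for $D\bar{\t}$ does not vanish at $x=1$. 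A low-order check against Section 2 of the paper ($\bar{\t}^{new}(q)=\t(q)+t_0^2/(2(1-q))+O(I^3)$, so $D\bar{\t}^{new}(x)|_{\t=0}=1/(1-qx)$) agrees with your computation and not with the displayed statement. So you cannot simply assert that ``the stated expression follows'': either you must exhibit where your truncation bookkeeping produces the missing term (it does not), or you should flag that the statement needs the correction $D\bar{\t}^{new}(x)=e^{\tau/(1-x)}e^{q\tau/(1-q)}/(1-qx)$, with the change propagated into Theorem 2.
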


\begin{proof}
From $\tau = \sum_n t_n \frac{\tau^n}{n!}$ we know that
\[D\tau = \sum_n t_n \frac{\tau^{n-1}}{(n-1)!} D\tau + \sum_n \frac{q^n}{(1-q)^{n+1}}\frac{\tau^n}{n!}.\]
Notice that $\bar{t}_1 = \sum_n t_n \frac{\tau^{n-1}}{(n-1)!}$, so we have
\[D\tau = \dfrac1{(1-\bar{t}_1)(1-q)} e^{\frac{q\tau}{1-q}}.\]
As a direct corollary, applying $D$ to $\bar{\t}^{fake} = (1-x)(e^{\frac{\tau}{1-x}}-1)$, we get
\[D\bar{\t}^{fake} = e^{\frac{\tau}{1-x}} D\tau = e^{\frac{\tau}{1-x}} \cdot \dfrac1{(1-\bar{t}_1)(1-q)} e^{\frac{q\tau}{1-q}}.\]

Next, differentiating $\bar{t}_m = \sum_n t_n \frac{\tau^{n-m}}{(n-m)!}$ with $D$ we get
\[D\bar{t}_m=(\bar{t}_{m+1} + (\frac{q}{1-q})^m (1-\bar{t}_1))D\tau.\]
So
\[D\bar{\t} = \sum_n D\bar{t}_n (x-1)^n = (\frac1{x-1} \bar{\t}+\frac{1-\bar{t}_1}{1-\frac{q}{1-q}(x-1)})D\tau.\]
Thus differentiating $\bar{\t}^{new} = e^{\frac{\tau}{1-x}}(\bar{\t}(x)+1-x)-(1-x)$ we get
\[D\bar{\t}^{new} = e^{\frac{\tau}{1-x}} \cdot (\frac1{(1-\bar{t}_1)(1-q)} + \frac1{1-qx})e^{\frac{q\tau}{1-x}}.\]
\end{proof}

\begin{proof} (of Theorem 2)
We compute $D\mathcal{F}_1(\t)$ with Theorem 1.
The first term simplifies as $\<\!\<\frac1{1-qL}\>\!\>_{1,1}$.

With the lemma, we know that the second term
\[D(\frac1{24} \log \frac{\partial \tau}{\partial t_0}) = -\frac1{24} D\log (1-\bar{t}_1) = \frac1{24}(\frac{\bar{t}_2}{1-\bar{t}_1} + \frac{x}{1-x})D\tau.\]
The third and fourth terms contribute
\[\begin{array}{ll}
&\bigg(Res_0+Res_1+Res_\infty+Res_q\bigg) \left(\dsum_{n=0}^3 \frac1{n!} \<\dfrac{D\bar{\t}^{new} (x^{-1})}{1-xL},\bar{\t}^{new}(x^{-1}),\cdots,\bar{\t}^{new}(x^{-1})\>_{1,n+1} \dfrac{dx}{x}\right)\\
-&\bigg(Res_0+Res_1+Res_\infty+Res_q\bigg) \left(\dsum_{n=0}^3 \frac1{n!} \<\dfrac{D\bar{\t}^{fake} (x^{-1})}{1-xL},\bar{\t}^{fake}(x^{-1}),\cdots,\bar{\t}^{fake}(x^{-1})\>_{1,n+1} \dfrac{dx}{x}\right)
\end{array}\]
by the product rule of derivatives.
\end{proof}

The correlators $\<\!\<\frac1{1-qL}\>\!\>_{1,1}$ is reconstructed from $\<\frac1{1-qL}\>_{1,1}$ by

\begin{prop}
\[\<\!\<\frac1{1-qL}\>\!\>_{0,1} = \sum_{a=0,\infty,q}Res_a \frac{e^{(1-x) \tau} e^{\frac{q\tau}{1-q}}}{1-qx^{-1}} \<\frac1{1-xL}\>_{1,1} \frac{dx}x.\]
\end{prop}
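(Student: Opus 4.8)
The plan is to account for the extra $\tau$-insertions in
\[
\langle\!\langle\tfrac1{1-qL}\rangle\!\rangle_{1,1}=\sum_{\bar n}\tfrac1{\bar n!}\langle\tfrac1{1-qL},\tau,\dots,\tau\rangle_{1,1+\bar n}
\]
exactly as in the twisted-strata analysis of Theorem 1. The marked points carrying $\tau$ bear no cotangent class, so the same dimension/dilaton argument used there shows that after forgetting them they bubble off onto genus-$0$ rational tails attached to the single genus-$1$ component that carries the distinguished leg $\frac1{1-qL}$; no genus-$1$ tail can appear. Resumming these rational tails dresses the distinguished leg by the genus-$0$ propagator, i.e.\ by the operator $\mathcal{S}_\tau$ of the Introduction, while the genus-$1$ core carries only the bare one-point ancestor, which on $\M_{1,1}$ coincides with the $\tau$-independent descendant $\langle\frac1{1-xL}\rangle_{1,1}$. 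Thus the first step is to establish
\[
\langle\!\langle\tfrac1{1-qL}\rangle\!\rangle_{1,1}=\mathcal{S}_\tau\cdot\langle\tfrac1{1-xL}\rangle_{1,1},
\]
where $\mathcal{S}_\tau\cdot$ denotes the dressing of the single leg, with all $\tau$-dependence carried by $\mathcal{S}_\tau$.

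Second, I would make $\mathcal{S}_\tau$ completely explicit for $X=\mathrm{pt}$. Here the genus-$0$ data are given by $J(\tau)=(1-q)e^{\tau/(1-q)}$ (Section 3) together with the explicit genus-$0$ one-point descendants of Section 2.1, so the scalar kernel $K_\tau(x,q)$ of $\mathcal{S}_\tau$ (the genus-$0$ two-point propagator coupling the input variable $x$ to the output variable $q$) can be computed directly. The aim is to identify it with $\frac{e^{(1-x)\tau}e^{q\tau/(1-q)}}{1-qx^{-1}}$: the factor $e^{q\tau/(1-q)}$ is the one already appearing in $D\tau$ (Lemma 3.1), the coupling $\frac1{1-qx^{-1}}$ is the propagator pole at $x=q$, and the remaining $e^{(1-x)\tau}$ is the dressing in the input variable $x$. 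Using the geometric expansion $\frac1{x-L}=\sum_k\frac{(L-\zeta)^k}{(x-\zeta)^{k+1}}$ at each root of unity $\zeta$ (these being the poles of $\langle\frac1{1-xL}\rangle_{1,1}=\frac1{(1-x^4)(1-x^6)}$), the $\mathcal{S}_\tau$-dressing is then rewritten as $-\sum_\zeta\mathrm{Res}_\zeta\big[K_\tau(x,q)\langle\frac1{1-xL}\rangle_{1,1}\frac{dx}{x}\big]$.

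Finally, I would invoke the residue theorem on $\mathbb{P}^1_x$, working order by order in $\tau$ so that $e^{(1-x)\tau}$ is polynomial in $x$ and the integrand is genuinely rational. The only poles of $K_\tau(x,q)\langle\frac1{1-xL}\rangle_{1,1}\frac{dx}{x}$ lie at the roots of unity $\zeta$, at $x=q$, and at $x=0,\infty$; since all residues sum to zero, $-\sum_\zeta\mathrm{Res}_\zeta=\sum_{a=0,\infty,q}\mathrm{Res}_a$, which is the asserted formula. The main obstacle is the second step: pinning down the exact kernel $K_\tau$, in particular the unexpected $e^{(1-x)\tau}$ factor, and verifying that the roots-of-unity residues genuinely reproduce the shifted invariant $\langle\!\langle\frac1{1-qL}\rangle\!\rangle_{1,1}$ through virtual Kawasaki--Riemann--Roch on the elliptic strata with extra automorphisms (orders $4$ and $6$), matched against the $\mathcal{S}_\tau$-dressing leg by leg. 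The conversion to residues at $0,\infty,q$ is then only the residue theorem, carried out with the substitution $q=x^{-1}$ and the polynomial interpretation of $\frac1{1-L/q}$ from the Remark preceding Lemma 1.1.
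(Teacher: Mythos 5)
Your step three (the residue-theorem conversion $-\sum_\zeta\mathrm{Res}_\zeta=\sum_{a=0,\infty,q}\mathrm{Res}_a$, order by order in $\tau$) is fine and is also how the paper closes its argument, but the first two steps contain a genuine gap --- and you have in fact put your finger on it yourself: the factor $e^{(1-x)\tau}$ cannot be produced by the mechanism you propose. The claim that all $\tau$-insertions bubble off onto rational tails, so that $\<\!\<\frac1{1-qL}\>\!\>_{1,1}$ is the genus-$0$ propagator $\mathcal{S}_\tau$ applied to a $\tau$-independent genus-$1$ core, is not correct. For the point target any genus-$0$ dressing in the variable $x$ carries factors of the form $e^{\tau/(1-x)}$ (an essential singularity at $x=1$, as in $J(\tau)=(1-q)e^{\tau/(1-q)}$ and in $\bar{\t}^{new},\bar{\t}^{fake}$ of Section 3), not the entire function $e^{(1-x)\tau}$; and the ``bare one-point ancestor'' on $\M_{1,1}$ is itself $\tau$-dependent, so the premise that all $\tau$-dependence sits in the leg-dressing already fails.

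What the paper actually uses is different. By the K-theoretic string equation (\cite{Lee01}, Section 4.4), forgetting the $\bar n$ extra marked points pushes their contribution down to $\M_{1,1}$ as the factor $(1-\mathcal{H}^\ast+\frac{q}{1-q})^{\bar n}$ involving the Hodge bundle $\mathcal{H}$: the forgotten points do \emph{not} all end up on rational tails, they leave a genuinely genus-$1$ trace on the core. Summing over $\bar n$ gives $e^{\frac{\tau}{1-q}}e^{-\tau\mathcal{H}^\ast}$, which is extracted from the generating function $\chi(\M_{1,1},\frac1{1-x\mathcal{H}^\ast}\cdot\frac1{1-qL})$ computed in \cite{Lee12} by substituting $x^m\mapsto\frac{(-\tau)^m}{m!}$ in its partial-fraction expansion. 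The kernel $e^{(1-x)\tau}e^{\frac{q\tau}{1-q}}/(1-qx^{-1})$ and the appearance of $\<\frac1{1-xL}\>_{1,1}$ on the right-hand side are then a repackaging of this Hodge generating function as residues at roots of unity, using the partial fractions of $\frac1{(1-x^4)(1-x^6)}$ and the identification of the Hodge line with the cotangent line on $\M_{1,1}$; the proposition is proved by an explicit term-by-term comparison of the two sides, not by a leg-by-leg Kawasaki matching on the order $4$ and $6$ elliptic strata. Without this string-equation/Hodge-bundle input your plan has no way to determine the kernel $K_\tau$, which is exactly the obstacle you flag; so the missing idea is not a computation you postponed but the actual engine of the proof.
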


\begin{proof}

We have by \cite{Lee12} that
\[\chi(\M_{1,1}, \frac1{1-x\mathcal{H}^\ast} \cdot \frac1{1-qL})\]
decomposes as partial fractions
\[\begin{array}{ll}
&\dfrac{5q-6}{24(q-1)^2(x-1)} + \dfrac{5q+6}{24(q+1)^2(x+1)} +\dfrac1{24(q-1)(x-1)^2}+\dfrac1{24(q+1)(x+1)^2}\\
+&\dfrac{qx+1}{4(q^2+1)(x^2+1)} + \dfrac{qx-q+1}{6(q^2-q+1)(x^2-x+1)}+ \dfrac{qx+q+1}{6(q^2+q+1)(x^2+x+1)}\end{array}\]
Here, for the last three terms, we have the decompositions
\[\begin{array}{ll}
\dfrac{x}{x^2+1} &= \dfrac12 \left(\dfrac{-i}{1-i x} + \dfrac{i}{1+i x}\right)\\
\dfrac{1}{x^2+1} &= \dfrac12 \left(\dfrac1{1-i x} + \dfrac1{1+i x}\right)\\
\dfrac{x-1}{x^2-x+1} &= - \dfrac13 \left(\dfrac{1+\omega}{1-\omega x} + \dfrac{1+\omega^{-1}}{1-\omega^{-1} x}\right)\\
\dfrac1{x^2-x+1} &= \dfrac13 \left(\dfrac{2-\omega}{1-\omega x} + \dfrac{2-\omega^{-1}}{1-\omega^{-1} x}\right)\\
\dfrac{x+1}{x^2+x+1} &= \dfrac13 \left(\dfrac{1-\omega^2}{1-\omega^2 x} + \dfrac{1-\omega^{-2}}{1-\omega^{-2} x}\right)\\
\dfrac{1}{x^2+x+1} &= \dfrac13 \left(\dfrac{2+\omega^2}{1-\omega^2 x} + \dfrac{2+\omega^{-2}}{1-\omega^{-2} x}\right)
\end{array}\]

On the other hand, by String equation \cite{Lee01} Section 4.4, we have
\[\<\!\<\frac1{1-qL}\>\!\>_{0,1} = \sum_{n \geq 0} \frac{\tau^n}{n!} \chi(\M_{1,1},(1-\mathcal{H}^\ast+\frac{q}{1-q})^n\cdot \frac1{1-qL}).\]
We know that if $\sum_n a_nx^n = \frac1{1-x\zeta}$, then $a_n = \zeta^n$ and with $(\mathcal{H}^\ast)^n$ substitited by $a_n$ we have
\[\sum_n \frac{\tau^n}{n!} (1-\mathcal{H}^\ast+\frac{q}{1-q})^n = \exp (\frac{\tau}{1-q} - \tau \zeta).\]
If $\sum_n a_nx^n = \frac1{(1-x\zeta)^2}$, then $a_n = (n+1)\zeta^n$ and with $(\mathcal{H}^\ast)^n$ substitited by $a_n$ we have
\[\sum_n \frac{\tau^n}{n!} (1-\mathcal{H}^\ast+\frac{q}{1-q})^n = (1-\zeta\tau)\exp (\frac{\tau}{1-q} - \tau \zeta).\]
Thus $\<\!\<\frac1{1-qL}\>\!\>_{0,1}$ is written as substituting $\frac1{1-x\zeta}$ by $\exp (\frac{\tau}{1-q} - \tau \zeta)$ and $\frac1{(1-x\zeta)^2}$ by $(1-\zeta\tau)\exp (\frac{\tau}{1-q} - \tau \zeta)$ in $\chi(\M_{1,1}, \frac1{1-x\mathcal{H}^\ast} \cdot \frac1{1-qL})$.

Putting everything together, the total contribution is
\[\begin{array}{ll}
&-\dfrac{5q-6}{24(q-1)^2} \cdot e^{-\tau}e^{\frac{\tau}{1-q}} + \dfrac{5q+6}{24(q+1)^2} \cdot e^{\tau}e^{\frac{\tau}{1-q}} \\
&+\dfrac1{24(q-1)} \cdot (1-\tau) e^{-\tau}e^{\frac{\tau}{1-q}} - \dfrac1{24(q+1)} \cdot (1+\tau) e^{\tau}e^{\frac{\tau}{1-q}} \\
&+\dfrac1{8(q^2+1)} \big((-iq+1)e^{-i\tau}+(iq+1)e^{i\tau}\big)e^{\frac{\tau}{1-q}}\\
&+\dfrac1{18(q^2-q+1)} \big((-(1+\omega)q+(2-\omega))e^{-\omega\tau}+(-(1+\omega^{-1})q+(2-\omega^{-1}))e^{-\omega^{-1}\tau}\big)e^{\frac{\tau}{1-q}}\\
&+\dfrac1{18(q^2+q+1)} \big(((1-\omega^2)q+(2+\omega^2))e^{-\omega\tau''}+((1-\omega^{-2})q+(2+\omega^{-2}))e^{-\omega^{-2}\tau}\big)e^{\frac{\tau}{1-q}}
\end{array}\]
which is simplified as
\[e^{\frac{\tau}{1-q}} \cdot \left( \dsum_{\zeta = \pm1}\big(\dfrac{\tau}{24(1-\zeta q)} + \dfrac{5-4\zeta q}{24(1-\zeta q)^2}\big)e^{\zeta\tau} + \dsum_{\zeta = \pm i} \dfrac1{4(1-\zeta^2)} \dfrac{e^{\zeta^{-1}\tau}}{1-\zeta q} + \dsum_{\zeta = \omega^{\pm1,\pm2}} \frac1{6(1-\zeta^2)}\dfrac{e^{\zeta^{-1}\tau}}{1-\zeta q} \right).\]

The right-hand side residue in our proposition is written as
\[\sum_{\zeta = \pm1,\pm i,\omega^{\pm1,\pm2}} Res_\zeta \frac{e^{(1-x^{-1}) \tau} e^{\frac{q\tau}{1-q}}}{1-qx} \<\frac1{1-x^{-1}L}\>_{1,1} \frac{dx}x\]
by the residue theorem and the substitution $x \to x^{-1}$.
Computing this residue with the partial fraction decomposition
\[\begin{array}{ll}
&\<\dfrac1{1-xL}\>_{1,1} = \dfrac1{(1-x^4)(1-x^6)}\\
=&\dsum_{\zeta = \pm1} \dfrac{5-4\zeta x}{24(1-\zeta x)^2} +\dsum_{\zeta = \pm i} \dfrac1{4(1-\zeta^2)} \dfrac1{1-\zeta x}+\dsum_{\zeta = \omega^{\pm1,\pm2}} \dfrac1{6(1-\zeta^2)} \dfrac1{1-\zeta x}.
\end{array}\]
proves the proposition.
\end{proof}

\section*{Appendix: A recursive formula for $\tau$}
\

In this appendix, we assume that $\Lambda$ is a local ring with maximal ideal $\Lambda_+$ that contain $t_n^\alpha$'s and $Q$'s, where $t_n^\alpha$'s are coefficients in $\t = \sum_{n,\alpha} t_n^\alpha \phi_\alpha q^n$.
Assume also that $\cap_n \Lambda_+^n = \{0\}$.
Recall that $K=K^0(X) \otimes \Lambda$.
We define a map $T: K \to K$, by mapping $\tau$ to
\[T(\tau) = \t(1) + \left[\dsum_{\alpha,\beta} \<\!\<LD\t(L), \phi_\alpha\>\!\>_{0,2}G^{\alpha\beta}\phi_\beta\right],\]
where $D\t(L)=\frac{\t(1)-\t(L)}{1-L}$.

We prove the following proposition.
\begin{prop*}
The sequence $\tau_n = T^n(0)$ converges to (the unique) $\tau \in K$ that makes $\bar{\t}(1)=0$. 
Moreover, $\tau_n-\tau \in \Lambda_+^n$.
\end{prop*}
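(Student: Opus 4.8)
The plan is to recast the defining condition $\bar{\t}(1)=0$ as a fixed-point equation for $T$ and then solve it by successive approximation, exploiting that $T$ is a contraction for the $\Lambda_+$-adic topology. Throughout I work in $K_+:=K^0(X)\otimes\Lambda_+$. Note first that $T$ maps $K_+$ into itself and is well defined: the matrix $G_{\alpha\beta}=(\phi_\alpha,\phi_\beta)+\langle\!\langle\phi_\alpha,\phi_\beta\rangle\!\rangle_{0,2}$ is a $\Lambda_+$-small perturbation of the invertible Poincar\'e pairing, hence invertible over $\Lambda$, so $G^{\alpha\beta}$ and $\langle\!\langle LD\t(L),\phi_\alpha\rangle\!\rangle_{0,2}$ are honest elements of $\Lambda$, and every summand of $T(\tau)$ lies in $\Lambda_+$.

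First I would identify the fixed points of $T$ with the solutions of $\bar{\t}(1)=0$. By Lemma 1.1(1) applied to $\x=1-q+\t$, the condition $\bar{\t}(1)=0$ is equivalent to $\langle\!\langle\phi_\beta,1,1-L+\t(L)\rangle\!\rangle_{0,3}=0$ for every $\beta$ (this is the vanishing used in the proof of Proposition 1.1). Removing the marked point carrying the input $1$ by the String equation, exactly as in the proof of Lemma 1.1 and keeping track of the near-unstable genus-$0$ boundary terms that arise, together with the elementary identity $\t(L)+D\t(L)=LD\t(L)+\t(1)$ and the relation $G_{\alpha\beta}=(\phi_\alpha,\phi_\beta)+\langle\!\langle\phi_\alpha,\phi_\beta\rangle\!\rangle_{0,2}$ used to clear the factors of $G$, turns this into the two-point identity $\tau-\t(1)=\sum_{\alpha,\beta}\langle\!\langle LD\t(L),\phi_\alpha\rangle\!\rangle_{0,2}G^{\alpha\beta}\phi_\beta$, i.e. precisely $\tau=T(\tau)$. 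This rearrangement is in effect the computation that motivates the definition of $T$.

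The heart of the argument is the contraction estimate: if $\sigma,\sigma'\in K_+$ satisfy $\sigma-\sigma'\in\Lambda_+^k$, then $T(\sigma)-T(\sigma')\in\Lambda_+^{k+1}$. The key observation is that $B(\tau):=T(\tau)-\t(1)$ carries in every summand the $\tau$-independent factor $LD\t(L)=L\frac{\t(1)-\t(L)}{1-L}$, whose $K^0(X)$-coefficients all lie in $\Lambda_+$ because the $t_n^\alpha$ do. The remaining dependence on $\tau$ enters only through the $\tau$-insertions of $\langle\!\langle\,\cdot\,\rangle\!\rangle_{0,2}$ and through $G^{\alpha\beta}=G^{\alpha\beta}(\tau)$; differentiating $B$ in $\tau$ therefore either produces one more $\tau$-insertion (still multiplied by $LD\t(L)\in\Lambda_+$) or differentiates $G^{\alpha\beta}$ (whose order-zero part is killed by the same factor $\langle\!\langle LD\t(L),\phi_\alpha\rangle\!\rangle_{0,2}\in\Lambda_+$). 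Hence every entry of the differential of $B$ lies in $\Lambda_+$, and since $\sigma,\sigma'\in K_+$ force $\sigma-\sigma'\in\Lambda_+^k$ with $k\ge 1$, the Taylor estimate (the quadratic and higher terms lying in $\Lambda_+^{2k}\subseteq\Lambda_+^{k+1}$) gives $B(\sigma)-B(\sigma')\in\Lambda_+\cdot\Lambda_+^{k}=\Lambda_+^{k+1}$.

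Finally I would run successive approximation. Let $\tau$ be the element characterized by $\bar{\t}(1)=0$; by the first step it is a fixed point of $T$, and $\tau\in K_+$. From $\tau_0-\tau=-\tau\in\Lambda_+$ and the contraction, induction yields $\tau_n-\tau=T^n(0)-T^n(\tau)\in\Lambda_+^{n}$, and since $\bigcap_n\Lambda_+^n=\{0\}$ this is exactly the assertion $\tau_n\to\tau$ with the stated rate. Uniqueness follows because any two fixed points differ by an element of $\bigcap_n\Lambda_+^n=\{0\}$, obtained by bootstrapping the contraction from $\Lambda_+^1$; and if one wants existence of $\tau$ intrinsically, the same estimate shows $(\tau_n)$ is $\Lambda_+$-adically Cauchy, hence convergent by completeness of $\Lambda$ to a fixed point. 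I expect the main obstacle to be the first step: the contraction and the convergence are essentially formal once the estimate is in place, whereas matching the fixed-point equation of $T$ with $\bar{\t}(1)=0$ requires careful String-equation bookkeeping — in particular correctly accounting for the genus-$0$ boundary (dilaton-type) contributions produced when the $1$-insertion is removed — which is the delicate point.
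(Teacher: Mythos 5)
Your proposal is correct and follows essentially the same strategy as the paper's appendix: identify the condition $\bar{\t}(1)=0$ with the fixed-point equation $\tau=T(\tau)$ via the string/dilaton equations, establish the contraction property $T(\sigma)-T(\sigma')\in\Lambda_+^{k+1}$ by exploiting the $\Lambda_+$-valued factor $LD\t(L)$ (which is exactly the paper's $\mathfrak{t}(q)=\t(q)-\t(1)+D\t(q)$, paired there via $\Omega$ rather than through your differential/Taylor phrasing), and conclude by successive approximation using $\bigcap_n\Lambda_+^n=\{0\}$. The only cosmetic difference is that you route the fixed-point identification through Lemma 1.1(1) and a second application of the string equation, where the paper computes $[\S_\tau(1-q+\t)]_+$ at $q=1$ directly by a residue formula before invoking the string and dilaton equations.
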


The map $T$ is a contraction mapping modulo powers of the maximal ideal $\Lambda_+$:

\begin{lemma*}
If $\tau' - \tau'' \in \Lambda_+^n$, then $T(\tau')-T(\tau'') \in \Lambda_+^{n+1}$.
\end{lemma*}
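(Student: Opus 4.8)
The plan is to track, term by term in the generating series defining $T$, the power of the maximal ideal $\Lambda_+$ carried by each correlator. The crucial structural input is that the coefficients $t_n^\alpha$ of $\t$ all lie in $\Lambda_+$, so that $\t(1) \in K^0(X)\otimes\Lambda_+$ and, since $D\t(L)=\frac{\t(1)-\t(L)}{1-L}=\sum_{n,\alpha} t_n^\alpha \phi_\alpha(1+L+\cdots+L^{n-1})$, also $L\,D\t(L)\in K^0(X)\otimes\Lambda_+[L]$. First I would record that the iterates $\tau_k=T^k(0)$ all lie in $\Lambda_+ K$ (by induction, using that $T$ maps $\Lambda_+ K$ into itself, which follows from the estimates below); this guarantees $\Lambda_+$-adic convergence of the double-bracket series $\<\!\<\cdots\>\!\>_{0,2}$ appearing in $T$, so it suffices to prove the Lemma for $\tau',\tau''\in\Lambda_+ K$.

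Next I would set $B_\alpha(\tau)=\<\!\<L\,D\t(L),\phi_\alpha\>\!\>_{0,2}$ and $C^{\alpha\beta}(\tau)=G^{\alpha\beta}$, so that $T(\tau)=\t(1)+\sum_{\alpha,\beta}B_\alpha(\tau)C^{\alpha\beta}(\tau)\phi_\beta$ and the constant term $\t(1)$ cancels in $T(\tau')-T(\tau'')$. The central observation is multilinearity: expanding a double bracket as $\sum_{d,\bar n}\frac{Q^d}{\bar n!}\<\cdots;\tau,\ldots,\tau\>_{0,2+\bar n,d}$, the difference between evaluating at $\tau'$ and at $\tau''$ is computed by the telescoping identity $\tau'^{\otimes \bar n}-\tau''^{\otimes \bar n}=\sum_{j=1}^{\bar n}\tau'^{\otimes(j-1)}\otimes(\tau'-\tau'')\otimes\tau''^{\otimes(\bar n-j)}$. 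Each resulting summand contains exactly one insertion of $\tau'-\tau''\in\Lambda_+^n$, hence lies in $\Lambda_+^n$; the $\bar n=0$ term is $\tau$-independent and drops out.

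I would then apply this to each factor separately. For $B_\alpha$ the fixed input $L\,D\t(L)\in\Lambda_+ K$ contributes a further factor of $\Lambda_+$, so $B_\alpha(\tau')-B_\alpha(\tau'')\in\Lambda_+^{n+1}$; moreover $B_\alpha(\tau)\in\Lambda_+$ for the same reason (linearity in the $\Lambda_+$-valued input $L\,D\t(L)$). For the metric, $G_{\alpha\beta}(\tau')-G_{\alpha\beta}(\tau'')\in\Lambda_+^n$ by the same telescoping, but with \emph{no} extra factor, since the fixed inputs $\phi_\alpha,\phi_\beta$ are not in $\Lambda_+$. Writing $G^{-1}(\tau')-G^{-1}(\tau'')=G^{-1}(\tau')\bigl(G(\tau'')-G(\tau')\bigr)G^{-1}(\tau'')$ and using that $[G_{\alpha\beta}]=[(\phi_\alpha,\phi_\beta)]+O(\Lambda_+)$ is invertible over $\Lambda$, I obtain $C^{\alpha\beta}(\tau')-C^{\alpha\beta}(\tau'')\in\Lambda_+^n$ with $C^{\alpha\beta}$ itself $\Lambda$-valued.

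Finally I would combine the two estimates by the product rule
\[B_\alpha(\tau')C^{\alpha\beta}(\tau')-B_\alpha(\tau'')C^{\alpha\beta}(\tau'')=\bigl(B_\alpha(\tau')-B_\alpha(\tau'')\bigr)C^{\alpha\beta}(\tau')+B_\alpha(\tau'')\bigl(C^{\alpha\beta}(\tau')-C^{\alpha\beta}(\tau'')\bigr).\]
The first summand lies in $\Lambda_+^{n+1}\cdot\Lambda=\Lambda_+^{n+1}$, and the second in $\Lambda_+\cdot\Lambda_+^n=\Lambda_+^{n+1}$, whence $T(\tau')-T(\tau'')\in\Lambda_+^{n+1}$, proving the Lemma. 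The main obstacle is exactly the bookkeeping in the third step: one must see that the single extra power of $\Lambda_+$ beyond $\Lambda_+^n$ is forced by the distinguished input $L\,D\t(L)$ in the $B$-factor and cannot come from the metric factor $C^{\alpha\beta}$, whose difference is genuinely only in $\Lambda_+^n$; it is precisely the product of $B_\alpha(\tau'')\in\Lambda_+$ against this difference that restores the required power.
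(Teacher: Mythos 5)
Your proof is correct and follows essentially the same route as the paper: the extra power of $\Lambda_+$ comes from the fixed insertion $L\,D\t(L)=\t(L)-\t(1)+D\t(L)\in\Lambda_+$, while multilinearity and telescoping in the $\tau$-insertions account for the factor $\Lambda_+^n$. The paper bundles the correlator and the inverse metric $G^{\alpha\beta}$ into a single tensor congruent mod $\Lambda_+^n$ and then pairs it with $\mathfrak{t}\in\Lambda_+$ via $\Omega$, whereas you separate the two factors and apply the product rule; this is only an organizational difference, and your version usefully makes explicit the inverse-metric estimate that the paper leaves implicit.
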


\begin{proof}
From $\tau' \equiv \tau''$ modulo $\Lambda_+^n$, we know that
\[\dsum_{\alpha,\beta,\gamma} \<\!\<\dfrac{\phi^\gamma}{1-qL}, \phi_\alpha\>\!\>_{0,2}\phi_\gamma \otimes G^{\alpha\beta}\phi_\beta\]
with inputs $\tau', \tau''$ for the $\tau$-part are congruent modulo $\Lambda_+^n$.
Now it follows from $\mathfrak{t}(q) = \t(q)-\t(1)+D\t(q) \in \Lambda_+$, and 
\[\dsum_{\alpha,\beta} \<\!\<\mathfrak{t}, \phi_\alpha\>\!\>_{0,2}G^{\alpha\beta}\phi_\beta = \Omega \left(\dsum_{\alpha,\beta,\gamma} \<\!\<\dfrac{\phi^\gamma}{1-qL}, \phi_\alpha\>\!\>_{0,2}\phi_\gamma \otimes G^{\alpha\beta}\phi_\beta, \mathfrak{t}(q)\right)\]
that $T(\tau')-T(\tau'') \in \Lambda_+^{n+1}$.
Here $\Omega$ is the pairing on $K \otimes \mathbb{Q}((q))$ given by $\Omega(f,g) = (Res_0+Res_\infty) (f(q^{-1}),g(q))\frac{dq}q$ \cite{perm7}.
\end{proof}

As a direct corollary, the fixed point of $T$ is unique.
Observe that $\tau_n = T^n(0)$ satisfy $\tau_0=0$ and $\tau_1=\t(1) \in \Lambda_+$.
So by Lemma 1 we know that $\tau_{n-1}-\tau_n \in \Lambda_+^n$, and thus $\{\tau_n\}$ converges to some fixed point $\tau \in K$ of $T$.

\begin{lemma*}
$\bar{\mathbf{t}}(1)=0$ if and only if $\tau$ is a fixed point of $T$.
\end{lemma*}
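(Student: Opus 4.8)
The plan is to evaluate the defining relation of $\bar{\t}$ at $q=1$ and recognize the result as the graph of the map $T$. Since $(1-q+\bar{\t}(q)) = [\S_\tau(1-q+\t)]_+$ and the dilaton term $1-q$ vanishes at $q=1$, we have $\bar{\t}(1) = [\S_\tau(1-q+\t)]_+\big|_{q=1}$, so the whole statement reduces to an explicit computation of $\S_\tau$ near $q=1$ followed by an algebraic identification. I would therefore aim to show that $\bar{\t}(1)=T(\tau)-\tau$ as elements of $K$; the biconditional is then immediate, since $\bar{\t}(1)=0 \iff T(\tau)-\tau=0 \iff \tau=T(\tau)$.

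For the computation, write $\x = 1-q+\t$ and use the partial-fraction manipulation of Lemma 1.1 (its degree-$0$ term at $q=1$), which gives $\big[\frac{\x(q)}{1-L/q}\big]_+\big|_{q=1} = LD\t(L) + (t_0-L)$, where $t_0=\t(1)$. Substituting into the definition of $\S_\tau$, and noting that the classical piece contributes $(\x,\phi_\alpha)\big|_{q=1}=(t_0,\phi_\alpha)$, I obtain
\[
\bar{\t}(1) = \sum_{\alpha,\beta}\Big[(t_0,\phi_\alpha) + \<\!\< t_0 - L + LD\t(L), \phi_\alpha \>\!\>_{0,2}\Big]G^{\alpha\beta}\phi_\beta .
\]
Regrouping with $G_{\gamma\alpha} = (\phi_\gamma,\phi_\alpha) + \<\!\<\phi_\gamma,\phi_\alpha\>\!\>_{0,2}$, the $t_0$-terms collapse, via $\sum_\alpha G_{\gamma\alpha}G^{\alpha\beta}=\delta^\beta_\gamma$, to $\sum_\gamma t_0^\gamma\phi_\gamma = t_0$; the $LD\t(L)$-terms are exactly $T(\tau)-t_0$ by the definition of $T$; and the $-L$-terms assemble into $-\Xi$ with $\Xi := \sum_{\alpha,\beta}\<\!\<L,\phi_\alpha\>\!\>_{0,2}G^{\alpha\beta}\phi_\beta$. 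Hence $\bar{\t}(1)=T(\tau)-\Xi$, and everything reduces to the single identity $\Xi=\tau$.

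I would prove $\Xi=\tau$ by a flatness argument. Differentiating in $\tau^\gamma$ (which inserts $\phi_\gamma$) and using $\partial_{\tau^\gamma}G^{\alpha\beta} = -\sum_{\mu\nu}G^{\alpha\mu}\<\!\<\phi_\mu,\phi_\nu,\phi_\gamma\>\!\>_{0,3}G^{\nu\beta}$, the dilaton equation \cite{perm7} in the form $\<\!\<L,\phi_\alpha,\phi_\gamma\>\!\>_{0,3}=\<\!\<1+\tau,\phi_\alpha,\phi_\gamma\>\!\>_{0,3}$, and the unit axiom $1*\phi_\gamma=\phi_\gamma$ (which follows from $G_{\gamma\alpha}=\<\!\<1,\phi_\gamma,\phi_\alpha\>\!\>_{0,3}$), one finds $\partial_{\tau^\gamma}\Xi = \phi_\gamma + (\tau-\Xi)*\phi_\gamma$, where $\psi*\phi_\gamma=\sum_{\alpha,\beta}\<\!\<\psi,\phi_\gamma,\phi_\alpha\>\!\>_{0,3}G^{\alpha\beta}\phi_\beta$ is the quantum $K$-product. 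Thus $W:=\Xi-\tau$ satisfies the homogeneous linear system $\partial_{\tau^\gamma}W=-\,W*\phi_\gamma$. Since $\tau\in\Lambda_+$, it remains only to check $W|_{\tau=0}=\Xi|_{\tau=0}=0$, i.e. that the Novikov-positive invariants $\<L,\phi_\alpha\>_{0,2,d}$ with $d\neq 0$ vanish; this holds because forgetting the marked point carrying $L$ identifies $L$ with $\omega_\pi$ twisted by the remaining section, whose derived pushforward along the genus-$0$ universal curve is zero. Uniqueness for the connection $\partial_{\tau^\gamma}+\phi_\gamma*$ then forces $W\equiv 0$, giving $\Xi=\tau$ and hence $\bar{\t}(1)=T(\tau)-\tau$.

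The main obstacle is the identity $\Xi=\tau$: it is the only place where the genuinely $K$-theoretic dilaton equation enters, and the delicate point is the vanishing of the positive-degree two-point invariants $\<L,\phi_\alpha\>_{0,2,d}$, $d\neq 0$, without which a $Q$-dependent constant would survive and spoil the clean equality. A secondary technical point, already addressed in Lemma 1.1 through the remark interpreting $\frac{1}{1-L/q}$ as a Laurent polynomial in $L$, is the justification for commuting $[\,\cdot\,]_+$ and evaluation at $q=1$ through the two-point correlators.
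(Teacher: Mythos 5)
Your argument is correct and follows the same skeleton as the paper's proof: both reduce the lemma to the single identity $\bar{\t}(1)=T(\tau)-\tau$, both evaluate $[\S_\tau(1-q+\t)]_+$ at $q=1$ to obtain the insertion $t_0-L+LD\t(L)$ (the paper does this via the residue formula $[f]_+=-(Res_0+Res_\infty)\frac{f(w)}{w-q}dw$ rather than by quoting Lemma 1.1, but the output is identical), and both then reduce everything to the identity $\Xi:=\sum_{\alpha,\beta}\<\!\<L,\phi_\alpha\>\!\>_{0,2}G^{\alpha\beta}\phi_\beta=\tau$. Where you genuinely diverge is in the proof of that identity. The paper invokes the string and dilaton equations once, at the two-point level, in the form $\<\!\<L,\phi_\alpha\>\!\>_{0,2}=(\tau,\phi_\alpha)+\<\!\<\tau,\phi_\alpha\>\!\>_{0,2}$, after which $\Xi=\tau$ is immediate from $\sum_\beta G_{\alpha\beta}G^{\beta\gamma}=\delta_\alpha^\gamma$. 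You instead differentiate $\Xi$ in $\tau^\gamma$, use the three-point dilaton identity and the unit axiom to exhibit $W=\Xi-\tau$ as a flat section of $\partial_{\tau^\gamma}+\phi_\gamma\ast$, and integrate back from $W|_{\tau=0}=0$. The two are equivalent---your three-point identity is exactly the $\tau^\gamma$-derivative of the paper's two-point one---but your route is longer and shifts the burden onto the initial condition, namely the vanishing $\<L,\phi_\alpha\>_{0,2,d}=0$ for $d\neq0$, which the paper never needs to isolate because it is absorbed into the cited string/dilaton identity. What your version buys is that this vanishing, and its geometric source ($R\pi_\ast L_1=0$ over genus-zero fibers), is made explicit rather than left inside the reference to \cite{perm7}; what it costs is the extra bookkeeping of $\partial_{\tau^\gamma}G^{\alpha\beta}$ and the uniqueness argument for the resulting linear system. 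Either way the biconditional follows as you state, so the proof stands.
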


\begin{proof}
By using the fact
\[[f(q)]_+ = -(Res_{w=0}+Res_{w=\infty}) \frac{f(w)}{w-q}dw,\]
we have
\[[\frac{\t(q)+1-q}{1-L/q}]_+ = \frac{q(\t(q)+1-q)-L(\t(L)+1-L)}{q-L}.\]
Evaluating at $q=1$, we have $(\t+D\t)(L) - L$.

So $\bar{\mathbf{t}}_1(1) = [S_1(\t(q)+1-q)]_+(1)$ is
\[\sum_{\alpha,\beta} \phi_\alpha G^{\alpha\beta}((\phi_\beta,\t(1))+\<\!\<\phi_\beta,\t(L)+D\t(L)-L\>\!\>_{0,2}).\]
By string and dilaton equations, 
\[\<\!\<L,\phi_\alpha\>\!\>_{0,2}=(\tau,\phi_\alpha) + \<\!\<\tau,\phi_\alpha\>\!\>_{0,2},\]
and so
\[\begin{array}{ll}
\bar{\mathbf{t}}(1) & =\dsum_{\alpha,\beta} \phi_\alpha G^{\alpha\beta}\left[(\phi_\beta,\t(1)-\tau) +\<\!\<\phi_\beta,\t(L)+D\t(L)-\tau\>\!\>_{0,2}\right]\\
& = \t(1) -\tau + \dsum_{\alpha,\beta} \<\!\<LD\t(L), \phi_\alpha\>\!\>_{0,2}G^{\alpha\beta}\phi_\beta\\
&= T(\tau)-\tau.
\end{array}\]
\end{proof}

\author{Dun Tang, Department of Mathematics, University of California, Berkeley, 
1006 Evans Hall, University Drive, Berkeley CA94720.
E-mail address: dun\_tang@math.berkeley.edu}

\end{document}